\documentclass{scrartcl}

\usepackage{amsmath,amsthm,amssymb,dsfont,booktabs}
\numberwithin{equation}{section}
\usepackage{mathtools,units}
\usepackage{graphicx,placeins}
\usepackage{xcolor}
\usepackage{multirow}
\usepackage{subcaption}
\usepackage[colorlinks=true]{hyperref}
\hypersetup{
  linkcolor=red!60!black,
  citecolor=green!60!black,
}

\newcommand{\X}{{\mathcal X}}
\newcommand{\Y}{{\mathcal Y}}
\newcommand{\V}{{\mathcal V}}
\newcommand{\W}{{\mathcal W}}

\newtheorem{lemma}{Lemma}
\newtheorem{theorem}[lemma]{Theorem}
\newtheorem{remark}[lemma]{Remark}
\newtheorem{algo}[lemma]{Algorithm}
\newtheorem{problem}[lemma]{Problem}
\newtheorem{assumption}[lemma]{Assumption}

\author{L. Lautsch\thanks{Otto-von-Guericke Universit\"at Magdeburg,
    Germany \texttt{leopold.lautsch@ovgu.de}} \and
  T. Richter\thanks{Otto-von-Guericke Universit\"at Magdeburg,
    Germany \texttt{thomas.richter@ovgu.de}}}

\title{Error estimation and adaptivity for differential equations with
  multiple scales in time}


\newcommand{\EA}{\textbf{\textsf{(EA)}}}
\newcommand{\EG}{\textbf{\textsf{(EG)}}}

\newcommand{\EF}{\textbf{\textsf{(EF)}}}
\newcommand{\ED}{\textbf{\textsf{(ED)}}}

\newcommand{\Ea}{\textbf{\textsf{(Ea)}}}
\newcommand{\Eb}{\textbf{\textsf{(Eb)}}}

\newcommand{\xt}{\mathbf{x}}

\newcommand\Cas[1]{C_{\hyperref[ass:#1]{A\ref*{ass:#1}}}}
\newcommand\Las[1]{\lambda_{\hyperref[ass:#1]{A\ref*{ass:#1}}}}

\begin{document}

\maketitle
\begin{abstract}
  We consider systems of ordinary differential equations with multiple
  scales in time. In general, we are interested in the long time
  horizon of a slow variable that is coupled to solution
  components that act on a fast scale.
  Although the fast scale variables are essential for the dynamics of the coupled problem, 	  they are often of no interest in themselves.
  Recently we have proposed a temporal multiscale
  approach that fits into the framework of the heterogeneous multiscale
  method and that allows for efficient simulations with significant
  speedups. Fast and slow scales are decoupled by introducing local
  averages and by replacing fast scale contributions by localized
  periodic-in-time problems.
  Here, we generalize this multiscale approach to a
  larger class of problems but in particular, we derive an a
  posteriori error estimator based on the dual weighted residual
  method that allows for a splitting of the error into averaging
  error, error on the slow scale and error on the fast scale. We
  demonstrate the accuracy of the error estimator and also its use
  for adaptive control of a numerical  multiscale scheme. 
\end{abstract}

\section{Introduction}
We are interested in the efficient approximation of dynamical systems with multiple scales in time. Such problems appear in various applications such as material damage mechanics~\cite{Murakami2012}, astrophysics~\cite{BehounkovaTobieChobletCadek2010} or cardiovascular settings~\cite{FreiRichterWick2016,MizerskiRichter2020}. Although multiscale problems are extensively studied in literature, see e.g.\,\cite{Cioranescu, Oleinik}, most works focus on problems where the multiscale character is in space but not in time.   The \emph{heterogeneous multiscale method (HMM)}~\cite{EEngquist2003,E2011,Abdulleetal2012,EngquistTsai2005} is a very general framework and easily applied to temporal multiscale dynamics. Here, fast and slow problems are decoupled by means of an averaging that gives an effective equation for the slow dynamics. The feedback between both scales is realized by localized fine scale problems that have to be approximated in every time step of the slow problem.

In~\cite{FreiRichter2020,MizerskiRichter2020} we have developed such a multiscale approach with applications to medical flow problems, where the slow scale describes the growth of a stenosis and where the fast problem is the oscillatory dynamics coming from heart driven blood flow. For decoupling the scales local periodic-in-time solutions describing the fast dynamics are introduced and have to be solved once in each time step of the slow problem. An a priori error estimate for this multiscale scheme has been shown for a simple problem based on the Stokes equation. Numerically, significant acceleration and a reduction of the computational time by a factor of up to $10\,000$ is observed as compared to fully resolved simulations.

Algorithmically such multiscale schemes are complex, as they are based on multiple discretization schemes for slow and fast scales and as they require careful control of the transmission operator that carries information between these scales. Time step sizes must be chosen for the slow and the fast scale and further, tolerances must be defined to control the approximation quality all problems that are involved in the multiscale approach. If one or both of the scales are described by partial differential equations it is also necessary to control the spatial discretization parameters.

Here, we will derive and discuss an a posteriori error estimator based on the dual weighted residual method~\cite{BeckerRannacher2001} for estimating functional errors that cover all error contributions coming from discretization and multiscale approximation. By splitting and localizing the error estimator to the various components, an adaptive multiscale scheme is realized that allows to optimally balance the different error contributions. The concept of goal oriented error estimation is chosen, since it allows for a uniform handling of temporal~\cite{SchmichVexler2008} and spatial discretization errors~\cite{BeckerRannacher1995} but also of truncation errors coming from the violation of conformity by the non-exact solution of sub problems~\cite{MeidnerRannacherVihharev2009} and finally it also allows to include the multiscale error which can be considered as a kind of model error~\cite{BraackErn2003}. Since the design of the error estimator is complex and involves a staggered approach with adjoint and tangent problems on both scales, we restrict the presentation to a system of ordinary differential equations to keep the notation and discussion as brief as possible. 

In the next section we describe the problem under consideration and we briefly summarize the multiscale approximation scheme as introduced in~\cite{FreiRichter2020}. Then, Section~\ref{sec:disc} casts the multiscale scheme into a temporal Galerkin formulation that will act as basis for the error estimator derived in Section~\ref{sec:error}. Numerical examples are discussed in Section~\ref{sec:num} and finally, we summarize in a short conclusion.

\section{Model problem and multiscale approximation}\label{sec:modelproblem}
We consider a system of ordinary differential equations.

\begin{problem}[Model Problem]\label{problem:model}
  On $I=[0,T]$ find $y:I\to\mathds{R}^c$ and $u:I\to\mathds{R}^d$ with $c,d\in\mathds{N}$ such that
  \begin{equation}\label{modelproblem}
    y'(t)= \epsilon f\big(y(t),u(t)\big),\quad 
    u'(t) = g\big(t,y(t),u(t)\big)
  \end{equation}
  with $y(0)=y_0\in\mathds{R}^c$ and $u(0)=u_0\in\mathds{R}^d$ and 
  the scale separation parameter $0<\epsilon\ll 1$. We will call $y(t)$ the slow component and $u(t)$ the fast component of the problem.
\end{problem}
Let the following assumptions hold.
\begin{assumption}[Slow Scale]
  \label{assumption:f}
  Let $f$ be continuous on $I\times \mathds{R}^c\times \mathds{R}^d$, 
  bounded
  \begin{equation}\label{ass:1Bf}
    \| f(y,u) \|
    \le \Cas{1Bf}\qquad \forall (y,u)\in  \mathds{R}^c\times \mathds{R}^d, 
  \end{equation}
  and Lipschitz and differentiable with respect to $y$ and $u$, such that 
  \begin{equation}\label{ass:1Lf}
    \begin{aligned}
      \|f(y,u)-f(Y,u)\| &\le \Cas{1Lf} \|y-Y\|\quad\forall
      y,Y\in\mathds{R}^c,\quad \forall u\in \mathds{R}^d\\
      \|f(y,u)-f(y,U)\| &\le \Cas{1Lf} \|u-U\|\quad\forall
      u,U\in\mathds{R}^d,\quad \forall y\in  \mathds{R}^c.
    \end{aligned}
  \end{equation}
\end{assumption}
\begin{remark}[Separation of temporal scales]
  By the boundedness of $f$, it holds
  \begin{equation}\label{ass:boundc}
    \|y'(t)\| \le \Cas{1Bf}\cdot\epsilon,\quad \|y(t)\| \le
    \Cas{boundc}\text{ for } t\in [0,T],
  \end{equation}
  with $\Cas{boundc}=\Cas{1Bf}\cdot \epsilon\cdot T + \|y_0\|$.
  We define
  \[
  X^c:= \{ y\in\mathds{R}^c,\; \|y\|\le \Cas{boundc}\}. 
  \]
\end{remark}
On the fast scale problem we impose the following assumptions.
\begin{assumption}[Fast Scale]\label{assumption:g}
  Let $g$ be continuous on $I\times \mathds{R}^c\times \mathds{R}^{d}$ 
  and Lipschitz 
  \begin{equation}\label{ass:1Lg}
    \|g(t,y_1,u)-g(t,y_2,u)\|\le \Cas{1Lg}\|y_1-y_2\|
    \text{ for all } y_1,y_2\in X^c,
  \end{equation}
  uniform in $t\in I$ and $u\in \mathds{R}^d$. We assume that $g$ is
  differentiable with respect to $y\in\mathds{R}^c$ and
  $u\in\mathds{R}^d$ with a Jacobian $\nabla_ug$ whose eigenvalues all
  have negative real part 
  \begin{equation}\label{ass:negdef}
    -\Las{negdef}:=\sup\{Re(\lambda),\; \lambda\text{ is Eigenvalue of
    }\nabla_u 
    g\big(t,y,u\big)\} <0,
  \end{equation}
  uniform in $t\in [0,T]$ and $y\in X^c$.   
  Finally, let $g$ be
  $1$-periodic in time
  \begin{equation}\label{ass:1Pg}
    g(t,y,u) = g(t+1,y,u)\quad\forall (t,y,u)\in \mathds{R}\times
    \mathds{R}^c\times \mathds{R}^d
  \end{equation}
  and we assume that for each $y\in X^c$ there exists
  a unique periodic-in-time solution to
  \begin{equation}\label{ass:perproblem}
    u'_y(t) = g\big(t,y,u_y(t)\big)\quad t\in I^P:=[0,1],\quad
    u_y(0)=u_y(1),
  \end{equation}
  which is bound by
  \begin{equation}\label{ass:boundu}
    \max_{t\in I^P}\|u_y\|\le \Cas{boundu},
  \end{equation}
  with a constant $\Cas{boundu}=C(\Cas{boundc})$. Finally, we assume
  that for all $y_1,y_2\in X^c$ and the corresponding periodic-in-time 
  solutions $u_{y_1},u_{y_2}:[0,1]\to \mathds{R}^d$ it holds
  \begin{equation}\label{ass:uclip}
    \sup_{t\in I^P} \|u_{y_1}(t)-u_{y_2}(t)\| \le \Cas{uclip}
    \|y_1-y_2\|. 
  \end{equation}
\end{assumption}

\begin{remark}[Scope of applications]\label{rem:applications}
In Section~\ref{sec:num} we will discuss specific systems of
differential equations that meet this assumptions.  The examples
considered in \cite{FreiRichter2020} and \cite{MizerskiRichter2020}
also fit into the frame of assumptions given above. In particular for
nonlinear equations, like the Navier-Stokes equations which are
considered in~\cite{MizerskiRichter2020}, the existence of
time-periodic solutions can only be shown under very strict
assumptions on the problem data, with limitations to small Reynolds
numbers~\cite{GaldiKyed2016a}. Hence, in
particular~(\ref{ass:perproblem})-(\ref{ass:uclip}) will be difficult
to validate in most application cases. Also,~(\ref{ass:negdef}) calls
for a damping behavior of the fast scale problem and is here used to
obtain uniqueness of solutions. Considering the Navier-Stokes
equations this also calls for bounds on the problem data. In
Section~\ref{sec:num} we will present a simple test case where all
assumptions can be validated.
\end{remark}

The multiscale scheme for the efficient approximation of
Problem~\ref{problem:model} is based on~\cite{FreiRichter2020} and on
the following averaged multiscale problem.
\begin{problem}[Averaged Multiscale Problem]\label{problem:multiscale}
  On $I=[0,T]$ find $Y: I\to \mathds{R}^c$ such that
  \begin{equation}\label{prob:1}
    Y'(t) = \epsilon \int_t^{t+1}
    f\big(Y(t),u_{Y(t)}(s)\big)\,\text{d}s,\quad Y(0)=y_0,
  \end{equation}
  and where $u_{Y(t)}(\cdot)$, for each $Y(t)\in\mathds{R}^c$, is given
  as the solution to the periodic-in-time micro problem
  \begin{equation}\label{prob:2}
    \frac{d}{ds} u_{Y(t)}(s) = g\big(s,Y(t),u_{Y(t)}(s)\big)\text{ in
    }I^P=[0,1],\quad
    u_{Y(t)}(1) = u_{Y(t)}(0). 
  \end{equation}
\end{problem}
Defining a \emph{fast scale feedback operator} ${\cal
  F}:\mathds{R}^c\to \mathds{R}^c$ by
\begin{equation}\label{prob:3}
  {\cal F}(Y):=\int_t^{t+1}f\big(Y(t),u_{Y(t)}(s)\big)\,\text{d}s,
\end{equation}
with $u_{Y(t)}$ given by~(\ref{prob:2}), the averaged multiscale
problem can be written as
\begin{equation}\label{prob:4}
  Y'(t) = \epsilon {\cal F}\big(Y(t)\big),\quad Y(0)=y_0,
\end{equation}
and the fast scale problem is formally removed. This problem notation
is basis for the numerical multiscale method to be described in
Section~\ref{sec:disc}. We will discretize~(\ref{prob:4}) with very
large time steps, and, in each time step $Y_{n-1}\mapsto Y_n$, the fast
scale feedback operator ${\cal F}(\cdot)$ must be evaluated in the approximations $Y_{n-1}$ and $Y_n$.
The clue in the design of this multiscale approach is the introduction
of locally periodic-in-time micro solutions $u_{Y(t)}(\cdot)$.  This
allows to formally decouple the microscale from the macroscale. ${\cal
  F}(Y_n)$ can be approximated without requiring any initial values that
might have to be obtained from the previous macro step solution
$Y_{n-1}$.

In the following we will show that this averaged multiscale problem
has a solution and we will show that this solution $Y(t)$ is close to
the original solution $y(t)$.

\subsection{Analysis of the multiscale error}

Given the above listed assumptions on the slow and the fast scale, we
can show that the solution to the multiscale problem,
Problem~\ref{problem:multiscale} is close to the resolved original
solution. The following theorem is a generalization of~\cite[Lemma
  10]{FreiRichter2020} to a more general class of equations.
\begin{theorem}[Multiscale error]\label{thm:error}
  Let Assumptions~\ref{assumption:f} and~\ref{assumption:g} hold. There exists a unique solution $Y(t)$ to Problem~\ref{problem:multiscale}. Let $\big(y(t),u(t)\big)$ be the solution to Problem~\ref{problem:model}. It holds
  \[
  \max_{t\in [0,T]} \|y(t)-Y(t)\| = {\cal O}(\epsilon). 
  \]
\end{theorem}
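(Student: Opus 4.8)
The plan is to first establish existence and uniqueness of $Y$, and then to control the error $e:=y-Y$ by a Grönwall argument whose decisive ingredient is an averaging consistency estimate.

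For the well-posedness of Problem~\ref{problem:multiscale} I would verify that the feedback operator $\mathcal{F}$ of~(\ref{prob:3}) is globally Lipschitz on $X^c$. Using the $1$-periodicity of $u_Y$ to fix the integration to $I^P$, I would write $\mathcal{F}(Y_1)-\mathcal{F}(Y_2)=\int_0^1\big(f(Y_1,u_{Y_1}(s))-f(Y_2,u_{Y_2}(s))\big)\,\mathrm{d}s$, insert the intermediate term $f(Y_2,u_{Y_1}(s))$, and bound the two resulting differences by the Lipschitz estimates~(\ref{ass:1Lf}) in $y$ and in $u$, the latter combined with~(\ref{ass:uclip}). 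This gives $\|\mathcal{F}(Y_1)-\mathcal{F}(Y_2)\|\le\Cas{1Lf}(1+\Cas{uclip})\|Y_1-Y_2\|$, so~(\ref{prob:4}) has a unique solution on $[0,T]$ by Picard--Lindelöf, with the a priori bound $\|Y(t)\|\le\Cas{boundc}$ following from~(\ref{ass:1Bf}) exactly as in~(\ref{ass:boundc}).

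For the error, subtracting~(\ref{prob:4}) from~(\ref{modelproblem}) gives $e'(t)=\epsilon\big(f(y,u)-\mathcal{F}(Y)\big)$, and I would split $f(y,u)-\mathcal{F}(Y)=\big(f(y,u)-\mathcal{F}(y)\big)+\big(\mathcal{F}(y)-\mathcal{F}(Y)\big)$. The second difference is bounded by $\Cas{1Lf}(1+\Cas{uclip})\|e\|$ and, after integration, produces the harmless Grönwall factor $\exp(\epsilon\Cas{1Lf}(1+\Cas{uclip})t)$. It then remains to show that the accumulated averaging error $\epsilon\int_0^t\|f(y(s),u(s))-\mathcal{F}(y(s))\|\,\mathrm{d}s$ is $\mathcal{O}(\epsilon)$, that is, that $\int_0^t\|f(y,u)-\mathcal{F}(y)\|\,\mathrm{d}s$ stays bounded uniformly in the length of the window.

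This averaging estimate is the main obstacle, and I would decompose it once more by inserting the frozen-coefficient periodic orbit $u_{y(t)}(t)$, the solution of~(\ref{ass:perproblem}) for the instantaneous value $y(t)$ extended $1$-periodically: $f(y,u)-\mathcal{F}(y)=\big(f(y,u)-f(y,u_{y(t)}(t))\big)+\big(f(y,u_{y(t)}(t))-\mathcal{F}(y)\big)$. For the first term, bounded by $\Cas{1Lf}\|u(t)-u_{y(t)}(t)\|$, I would show that the fast variable relaxes onto the periodic orbit: setting $w:=u-u_{y(t)}(t)$ and differentiating, the dynamics of $w$ is driven by $g(t,y,u)-g(t,y,u_{y(t)}(t))$ plus a drift of size $\mathcal{O}(\epsilon)$ coming from $\tfrac{d}{dt}u_{y(t)}(t)$ through $y'=\mathcal{O}(\epsilon)$ and the bound~(\ref{ass:uclip}); the negative-real-part condition~(\ref{ass:negdef}) then yields an exponential decay $\|w(t)\|\le C\big(e^{-\Las{negdef}t}\|w(0)\|+\epsilon\big)$, so that $\int_0^t\|w\|\,\mathrm{d}s=\mathcal{O}(1)$, the initial transient contributing only through $\int_0^\infty e^{-\Las{negdef}s}\,\mathrm{d}s$. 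Upgrading the eigenvalue condition~(\ref{ass:negdef}) to such a norm contraction via a Lyapunov function is the one genuinely technical point here. For the second, zero-mean term I would use that, by construction of $\mathcal{F}$ and periodicity, $\int_k^{k+1}\big(f(\bar y,u_{\bar y}(s))-\mathcal{F}(\bar y)\big)\,\mathrm{d}s=0$ for any frozen $\bar y$; freezing $y$ at $y(k)$ on each period $[k,k+1]$ and estimating the variation of $y$ across the period by $\mathcal{O}(\epsilon)$ via~(\ref{ass:boundc}),~(\ref{ass:1Lf}) and~(\ref{ass:uclip}) leaves $\mathcal{O}(\epsilon)$ per period, and summing over the $\mathcal{O}(t)$ periods gives $\mathcal{O}(\epsilon t)=\mathcal{O}(1)$. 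Collecting the two contributions shows $\int_0^t\|f(y,u)-\mathcal{F}(y)\|\,\mathrm{d}s=\mathcal{O}(1)$, whence $\|e(t)\|=\mathcal{O}(\epsilon)$ after the Grönwall step. The delicate feature throughout is that every single bound must be uniform in the length of the time window, so that the $\epsilon$ prefactor is not consumed by a factor growing like $t$; the exponential stability~(\ref{ass:negdef}) and the exact per-period cancellation are precisely what guarantee this.
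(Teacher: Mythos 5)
Your proposal is correct in substance but takes a genuinely different route from the paper's proof. Both arguments rest on the same two pillars --- the damping assumption~(\ref{ass:negdef}) together with the stability bound~(\ref{ass:uclip}) for the fast problem, and the slowness $\|y'\|={\cal O}(\epsilon)$ --- and your well-posedness step (Lipschitz continuity of ${\cal F}$ with constant $\Cas{1Lf}(1+\Cas{uclip})$) is identical to the paper's. The difference is in the error estimate: you compare $y$ and $Y$ directly and split through ${\cal F}(y)$, which forces you to control the averaging defect $f(y,u)-{\cal F}(y)$ by two separate mechanisms, namely the relaxation of $u$ onto the frozen periodic orbit (your $w$-estimate, which is essentially Lemma~\ref{lemma:difffast} of the paper, made slightly more general since you admit an unprepared initial value $u_0$ and integrate the exponential transient) and an explicit per-period cancellation of the oscillation of $f\big(y,u_{y(t)}(t)\big)$ around its mean ${\cal F}\big(y(t)\big)$ --- the classical Bogoliubov-type averaging step. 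The paper avoids the cancellation argument entirely: it introduces the sliding average $\bar Y(t)=\int_t^{t+1}y(s)\,\text{d}s$, shows $\|y-\bar Y\|={\cal O}(\epsilon)$, and compares $\bar Y$ with $Y$; since $\bar Y'(t)=\epsilon\int_t^{t+1}f\big(y(s),u(s)\big)\,\text{d}s$ is itself a window average, every integrand comparison becomes a pointwise norm estimate ($\|y(s)-Y(t)\|$, then $\|u_{Y(t)}(s)-u_{y(s)}(s)\|$ via~(\ref{ass:uclip}), then $\|u_{y(s)}(s)-u(s)\|$ via Lemma~\ref{lemma:difffast}), followed by the same Gr\"onwall step. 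Your route is the standard averaging-theory argument and handles general fast initial data; the paper's sliding-average trick buys lighter bookkeeping, at the cost of the extra ${\cal O}(\epsilon)$ comparison $y\approx\bar Y$ and of implicitly assuming prepared data $u(0)=u_{y(0)}(0)$ in Lemma~\ref{lemma:difffast}. Note also that the technical point you honestly flag --- upgrading the eigenvalue condition~(\ref{ass:negdef}) on a time- and state-dependent Jacobian to an actual decay estimate --- is glossed over in the paper as well, via the closed-form exponential $\Phi$ in the proof of Lemma~\ref{lemma:difffast}.

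One intermediate claim you should repair: the reduction ``it remains to show $\int_0^t\|f(y,u)-{\cal F}(y)\|\,\text{d}s={\cal O}(1)$'' is not achievable, because for the zero-mean part the integrand is ${\cal O}(1)$ pointwise and does not decay, so that integral grows like $t$. What your cancellation argument actually bounds --- and what the Gr\"onwall argument in integrated form actually needs --- is the signed quantity $\sup_{t\le T}\big\|\int_0^t\big(f(y,u)-{\cal F}(y)\big)\,\text{d}s\big\|$. With the norm outside the integral, your two estimates (integrable relaxation term, per-period cancellation summed over ${\cal O}(\epsilon^{-1})$ periods) combine correctly and the proof goes through.
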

\begin{proof}
  Let $Y_1,Y_2:I\to X^c$ be given. For the right hand side of Problem~\ref{problem:multiscale}, given in the form~(\ref{prob:4}) it holds
  \[
  \begin{aligned}
    \|{\cal F}(Y_1(t))-{\cal F}(Y_2(t))\|
    &\le \int_t^{t+1}
    \|f\big(Y_1(t),u_{Y_1(t)}(s)\big)-
    f\big(Y_2(t),u_{Y_2(t)}(s)\big)\|\,\text{d}s\\
    &\le \Cas{1Lf}\int_t^{t+1}
    \|Y_1(t)-Y_2(t)\|+
    \|u_{Y_1(t)}(s)-u_{Y_2(t)}(s)\|
    \,\text{d}s\\
    &\le \Cas{1Lf}(1+\Cas{uclip})\|Y_1(t)-Y_2(t)\|. 
  \end{aligned}
  \]
  Hence, the problem is Lipschitz and a unique solution exists. The bound $\|Y(t)\|\le \Cas{1Bf}\cdot\epsilon\cdot t$ shows that this solution exists on all $I=[0,T]$. 
  
  To start with, we introduce the average
  \[
  \bar Y(t):=\int_t^{t+1}y(s)\,\text{d}s.
  \]
  For this average it holds
  \begin{equation}\label{thm:A}
    \|y(t)-\bar Y(t)\|
    \le \int_t^{t+1}\|y(t)-y(s)\|\,\text{d}s
    =\int_t^{t+1}\|\int_t^sy'(r)\,\text{d}r\|\,\text{d}s
    \le \frac{\Cas{1Bf}}{2}\epsilon.
  \end{equation}
  The remaining error $w(t):=\bar Y(t)-Y(t)$ is governed by
  \begin{equation}\label{thm:0}
    w'(t)
    = \epsilon \int_t^{t+1}
    f\big(y(s),u(s)\big)
    - f\big(Y(t),u_{Y(t)}(s)\big)
    \,\text{d}s,\quad
    w(0) = \int_0^1 y(s)\,\text{d}s-y_0,
  \end{equation}
  where the initial value is estimated with help of (\ref{thm:A})
  \begin{equation}\label{thm:1}
    \|w(0)\| \le \int_0^1 \|y(s)-y(0)\|\,\text{d}s \le
    \frac{\Cas{1Bf}}{2}\epsilon. 
  \end{equation}
  To estimate $w(t)$ we split the right hand side of~(\ref{thm:0})
  into 
  \begin{multline}\label{thm:2}
    f\big(y(s),u(s)\big)
    - f\big(Y(t),u_{Y(t)}(s)\big)\\
    =
    \Big(
    f\big(y(s),u(s)\big)
    - f\big(Y(t),u(s)\big)
    \Big)
    +
    \Big(
    f\big(Y(t),u(s)\big)
    - f\big(Y(t),u_{Y(t)}(s)\big)
    \Big).
  \end{multline}
  Then, Lipschitz continuity of $f(\cdot)$ gives
  \begin{equation}\label{thm:3}
    \|w'(t)\| \le \Cas{1Lf} \epsilon\int_t^{t+1}
    \Big( \|y(s)-Y(t) \| + \|u_{Y(t)}(s)-u(s)\|\Big)\,\text{d}s.     
  \end{equation}
  The first term is bounded by introducing $\pm \bar Y(t)$
  \begin{equation}\label{thm:4}
    \begin{aligned}
      \int_t^{t+1}\|y(s)&-Y(t) \|\,\text{d}s
      \le \int_t^{t+1}\|y(s)-\bar Y(t)\|\,\text{d}s + \|\bar Y(t)-Y(t)\|\\
      & \le \int_t^{t+1}\int_t^{t+1}\|y(s)-y(r)\|\,\text{d}r\,\text{d}s
      + \|w(t)\|\\ 
      &= \int_t^{t+1}\int_t^{t+1}\|\int_r^s
      y'(t)\,\text{d}t\|\,\text{d}r\,\text{d}s  +  \|w(t)\|
      \le \frac{\Cas{1Bf}}{3}\epsilon + \|w(t)\|
    \end{aligned}
  \end{equation}
  To bound the second term in~(\ref{thm:3}) we introduce $\pm
  u_{y(s)}(s)$
  \begin{equation}\label{thm:5}
    \int_t^{t+1} \|u_{Y(t)}(s)-u(s)\|\,\text{d}s
    \le 
    \int_t^{t+1}
    \|u_{Y(t)}(s)-u_{y(s)}(s)\|+
    \|u_{y(s)}(s)-u(s)\|\,\text{d}s
  \end{equation}
  Here, the first term is bounded with help of~(\ref{ass:uclip})
  and~(\ref{thm:A})
  \begin{equation}\label{thm:6}
    \begin{aligned}
      \int_t^{t+1}
      \|&u_{Y(t)}(s)-u_{y(s)}(s)\|\,\text{d}s
      \le \Cas{uclip}\int_t^{t+1} \|Y(t)-y(s)\|\,\text{d}s\\
      &\le \Cas{uclip}\Big( \|Y(t)-\bar Y(t)\|
      +  \|\bar Y(t)-y(t)\|
      + \int_t^{t+1} \| y(t)-y(s)\|\,\text{d}s
      \Big)\\
      &\le \Cas{uclip}\Big( \|w(t)\| + \Cas{1Bf}\epsilon\Big).
    \end{aligned}
  \end{equation}

  The second term in~(\ref{thm:5}), $\|u_{y(s)}(s)-u(s)\|$, is more
  subtle to estimate. It measures the difference between the fully
  dynamic solution $u(s)$, evolving around the slow scale $y(s)$ to
  the isolated periodic-in-time solutions for each $y(s)$ fixed. For
  better readability we move the estimate of this term to the separate
  Lemma~\ref{lemma:difffast} which shows
  \begin{equation}\label{thm:7}
    \int_t^{t+1} \|u_{y(s)}(s)-u(s) \|\,\text{d}s
    \le \Cas{diffast} \cdot \epsilon.
  \end{equation}
  Combining this with~(\ref{thm:1})-(\ref{thm:6}) gives
  \begin{equation}\label{thm:8}
    \|w'(t)\| \le \Cas{1Lf}\epsilon \Big(
    \big(1+\Cas{uclip}\big)\|w(t)\| +
    \big(
    \frac{\Cas{1Bf}}{3}
    + \Cas{uclip} + \Cas{diffast}
    \big)
    \epsilon\Big),
  \end{equation}
  where $\|w(0)\| \le \frac{\Cas{1Bf}}{2}\epsilon$. Using~\cite[Sec. 3]{Conti1956} we can estimate $\|w(t)\|$ by the solution to the differential equation with the corresponding right hand side
  \begin{equation}\label{thm:9}
    \|w(t)\|
    \le \Big(1+ \exp\big(
    \Cas{1Lf}(1+\Cas{uclip})\epsilon\cdot t
    \big)\Big)
    \big(
    \Cas{1Bf}+\Cas{uclip}+\Cas{diffast}
    \big)\epsilon,
  \end{equation}
  which for $t = {\cal O}(\epsilon^{-1})$ is bound by
  \begin{equation}\label{ass:thmC}
    \|\bar Y(t)-Y(t)\| \le \Cas{thmC} \epsilon,
  \end{equation}
  with a constant
  $\Cas{thmC}=C(\Cas{1Bf},\Cas{1Lf},\Cas{uclip},\Cas{diffast})$.  Finally, the claim of the theorem follows by   combining~(\ref{thm:A}) and~(\ref{ass:thmC})
  \[
  \|y(t)-Y(t)\| \le \|y(t)-\bar Y(t)\| + \|\bar Y(t)-Y(t)\|
  \le \frac{\Cas{1Lf}}{2}\epsilon + \Cas{thmC} \epsilon. 
  \]
\end{proof}

It remains to prove estimate~(\ref{thm:7}), the difference between the 
dynamically evolving fast scale solution and the local
periodic-in-time solutions. This lemma is close
to~\cite[Lemma 9]{FreiRichter2020}. 
\begin{lemma}\label{lemma:difffast}
  Let $y\in C^1(I;X^c)$ be given such that
  $y(0)=y_0$ and $\|y'(t)\| \le \Cas{1Bf}\cdot \epsilon$.
  Let $u_{y(t)}(s)$ be the family of periodic-in-time fast scale
  solutions for $y(t)$ fixed and let $u(t)$ be the dynamic solution to
  \[
  u'(t) = g\big(t,y(t),u(t)\big),\quad 
  u(0) = u_{y(0)}(0). 
  \]
  There exists a constant
  $\Cas{diffast}:=C(\Cas{1Bf},\Cas{uclip},\Las{negdef})$ 
  such that 
  \begin{equation}\label{ass:diffast}
    \| u(t)-u_{y(t)}(t)\| \le \Cas{diffast}\cdot \epsilon.
  \end{equation}
\end{lemma}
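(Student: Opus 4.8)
The plan is to control the error $e(t):=u(t)-u_{y(t)}(t)$, which satisfies $e(0)=0$ by the matching initial condition $u(0)=u_{y(0)}(0)$, so that no initial transient has to be absorbed. The strategy is to derive a linear differential inequality for $e$ driven by an $\mathcal{O}(\epsilon)$ forcing, and then to exploit the spectral gap~(\ref{ass:negdef}) to turn exponential decay of the linearised fast dynamics into a uniform-in-$t$ bound on $\|e\|$.

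First I would study the evolution of the ``diagonal'' function $v(t):=u_{y(t)}(t)$. Writing the increment $v(t+h)-v(t)$ and inserting $\pm u_{y(t)}(t+h)$ separates a genuine flow part, which tends to the internal derivative $\tfrac{d}{ds}u_{y(t)}(s)\big|_{s=t}=g(t,y(t),v(t))$, from a parameter-drift part $u_{y(t+h)}(t+h)-u_{y(t)}(t+h)$. The latter is controlled purely by the Lipschitz estimate~(\ref{ass:uclip}) together with $\|y'\|\le\Cas{1Bf}\epsilon$: it is bounded by $\Cas{uclip}\|y(t+h)-y(t)\|\le \Cas{uclip}\Cas{1Bf}\epsilon|h|$. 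Hence $v$ is Lipschitz and satisfies, for almost every $t$,
\[
v'(t)=g\big(t,y(t),v(t)\big)+R(t),\qquad \|R(t)\|\le \Cas{uclip}\Cas{1Bf}\,\epsilon .
\]
This route uses only the Lipschitz bound~(\ref{ass:uclip}) and avoids having to establish differentiability of the map $y\mapsto u_y$.

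Subtracting this identity from $u'(t)=g(t,y(t),u(t))$ and expressing the $g$-difference through an averaged Jacobian $A(t):=\int_0^1\nabla_u g\big(t,y(t),v(t)+\theta e(t)\big)\,\text{d}\theta$ yields the linear equation $e'(t)=A(t)e(t)-R(t)$ with $e(0)=0$. The crucial input is the uniform dissipativity furnished by~(\ref{ass:negdef}): in a suitable (weighted) inner product the gap $-\Las{negdef}$ provides a constant $\mu>0$ with $\langle A(t)\xi,\xi\rangle\le-\mu\|\xi\|^2$ for all admissible $t$ and all $\xi$, so that
\[
\tfrac12\tfrac{d}{dt}\|e(t)\|^2=\langle A(t)e(t),e(t)\rangle-\langle R(t),e(t)\rangle\le-\mu\|e(t)\|^2+\|R(t)\|\,\|e(t)\|.
\]
Dividing by $\|e\|$ and integrating from the zero initial datum gives $\|e(t)\|\le\mu^{-1}\sup_s\|R(s)\|\le\Cas{diffast}\,\epsilon$ uniformly in $t$, with $\Cas{diffast}=C(\Cas{1Bf},\Cas{uclip},\Las{negdef})$, which is exactly~(\ref{ass:diffast}).

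The main obstacle is precisely this last step. A pointwise spectral gap for the time-varying matrix $A(t)$ does \emph{not} by itself force decay of its propagator, yet decay is indispensable here: a merely bounded propagator would yield $\|e\|=\mathcal{O}(\epsilon t)$, which is worthless over the horizon $t=\mathcal{O}(\epsilon^{-1})$. The real work is therefore to convert~(\ref{ass:negdef}) into a genuine uniform exponential contraction, i.e.\ to exhibit a single quadratic Lyapunov form (equivalently, one inner product) in which every admissible Jacobian $\nabla_u g(t,y,u)$, and hence the convex average $A(t)$, is uniformly negative definite. This is where the assumed uniformity of the gap over $t\in[0,T]$ and $y\in X^c$, together with the a priori bound~(\ref{ass:boundu}) confining the relevant states, has to be invoked, in the spirit of~\cite[Lemma 9]{FreiRichter2020}.
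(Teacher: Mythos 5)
Your proposal follows the same overall decomposition as the paper's proof: both compare the dynamic solution $u(t)$ with the diagonal function $t\mapsto u_{y(t)}(t)$, show that this diagonal satisfies the fast equation up to a forcing of size $\Cas{uclip}\Cas{1Bf}\,\epsilon$ generated by the slow drift $y'$, and then use the spectral condition~(\ref{ass:negdef}) to turn this $\mathcal{O}(\epsilon)$ forcing into a uniform-in-time bound with constant of the form $\Cas{1Bf}\Cas{uclip}/\Las{negdef}$, exactly as in~(\ref{thm:10}). Within that frame you deviate in two technical points, both defensible. First, the paper differentiates $u_{y(t)}(t)$ in $t$, producing the term $Du_{y(t)}(t)\,y'(t)$, and must then argue via a linear tangent equation that $Du_y$ exists before bounding it by $\Cas{uclip}$ through the difference quotient~(\ref{der2}); your increment argument with $\pm u_{y(t)}(t+h)$ reaches the same bound on the drift directly from~(\ref{ass:uclip}), never needing differentiability of $y\mapsto u_y$, which is a genuine simplification. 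Second, where the paper solves the error equation with an explicit propagator $\Phi(t)=\exp\big(\nabla g\cdot t\big)$ and bounds $\int_0^t\Phi \le 1/\Las{negdef}$, you run a Lyapunov/energy estimate. Here you are in fact more candid than the paper: you correctly observe that a pointwise eigenvalue condition on a time-varying Jacobian does not by itself give decay of the propagator, and that one must exhibit a single inner product in which all admissible Jacobians (and hence your averaged matrix $A(t)$) are uniformly dissipative. You flag this as the remaining work without carrying it out; note, however, that the paper's own proof silently elides the very same point, since its formula for $\Phi$ treats the linearized system as if it had frozen coefficients. So your proposal is correct to the same degree as the published argument, with the advantage that it makes explicit which strengthening of~(\ref{ass:negdef}) (a uniform quadratic Lyapunov form on the compact set delimited by~(\ref{ass:boundu})) would close the gap in both.
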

\begin{proof}
  For the periodic-in-time solutions it holds
  \[
  \frac{d}{dt} u_{y(t)}(t) = u'_{y(t)}(t)
  + \underbrace{\frac{du_{y(t)}}{dy(t)}(t)}_{=:Du_{y(t)}(t)} y'(t),
  \]
  and the difference $v(t):=u(t)-u_{y(t)}(t)$ is governed by
  \[
  v'(t)  = g\big(t,y(t),u(t)\big)
  - g\big(t,y(t),u_{y(t)}(t)\big) - Du_{y(t)}(t)y'(t),
  \]
  and, since $g$ is differentiable, it holds
  \[
  v'(t)  = \nabla_y g\big(t,y(t),\gamma(t)\big)v(t) -
  Du_{y(t)}(t)y'(t),
  \]
  where $\gamma(t)\in \mathds{R}^u$ is an intermediate between $u(t)$ and $u_{y(t)}(t)$. 
  Since $v(0)=u_{y(0)}(0)-u_{y(0)}(0)=0$  it holds
  \[
  v(t) = \int_0^t \Phi(s) Du_{y(s)}(s)y'(s)\,\text{d}s,
  \quad
  \Phi(t) = \exp\Big(\nabla_y g\big(t,y(t),\gamma(t)\big)\cdot t\Big),
  \]
  such that
  \begin{equation}\label{der1}
    \|v(t)\| \le 
    \Cas{1Bf} \epsilon \int_0^t\Phi(s)\,\text{d}s
    \cdot \sup_{s\in [0,t]} | Du_{y(s)}(s)|
    \le \sup_{s\in [0,t]} | Du_{y(s)}(s)| \cdot
    \frac{\Cas{1Bf}}{\Las{negdef}}\epsilon,
  \end{equation}
  Using that all eigenvalues of $\nabla_y g$  have a negative real
  part. 
  To estimate $Du_y(t)$ we first note that $g$ is differentiable
  with respect to $y\in\mathds{R}^c$ such that
  \[
  Du_y'(t) =\nabla_u g\big(t,y,u_y(t)\big) Du_y(t) +  \nabla_y
  g\big(t,y,u_y(t)\big). 
  \]
  This equation is linear in $Du_y(t)$ such that a unique solution
  exists. Given two periodic solutions $u_{Y_1}$ and $u_{Y_2}$ and
  using~(\ref{ass:uclip}) we can bound the derivative
  $Du_y(t)$ by estimating
  \begin{equation}\label{der2}
    \frac{ |u_{Y_1}(t)-u_{Y_2}(t)|}{|Y_1-Y_2|} \le
    \Cas{uclip},
  \end{equation}
  uniform in $Y_1,Y_2\in\mathds{R}^d$. Hence, combining~(\ref{der1})
  and~(\ref{der2}) 
  \begin{equation}\label{thm:10}
    \|u(t)-u_{y(t)}(t)\|\le \frac{\Cas{1Bf}\Cas{uclip}}{\Las{negdef}}
    \epsilon. 
  \end{equation}
\end{proof}

Theorem~\ref{thm:error} is the analytical basis for the following
numerical approximation scheme. We have shown that the solution to the
averaged multiscale problem, Problem~\ref{problem:multiscale} is close
to the solution of the  original model problem. The numerical scheme
will be based on the approximation of the averaged equation in Problem~\ref{problem:multiscale} using large time steps $K$,
which are by far larger than the micro scale. For evaluating the right
hand side, we will have to evaluate the transfer operator ${\cal
  F}\big(Y(t)\big)$. This will require the solution of a
periodic-in-time micro problem. Here a small time step $k\ll K$
must be employed. In~\cite{FreiRichter2020} we have further given an a
priori error estimator for the numerical discretization of the
averaged multiscale method and, in~\cite[Theorem 18]{FreiRichter2020} we
have shown that the multiscale scheme converges with second order, 
${\cal O}(\epsilon^2 K^2 + k^2)$, if both averaged macro scale problem and
periodic micro problem are discretized with second order time stepping
schemes.

\subsection{Variational formulation}

We conclude by defining the variational formulation of the averaged 
multiscale problem which will be the basis for the Galerkin
discretization and the a posteriori error estimator. 

\begin{problem}[Variational formulation of the multiscale problem]
  \label{primal:problem} 
  Find $Y\in \X$ such that 
  \begin{equation}\label{var:long}
    \begin{aligned}
      A(Y,\Phi) &= 0\quad \forall \Phi\in \Y,\\
      A(Y,\Phi) \coloneqq
      \int_0^T \big( Y'(t)-\epsilon{\cal F}\big(Y(t)\big)\big)&\cdot
      \Phi(t)\,\text{d}t,\quad 
          {\cal F}(Y) \coloneqq \int_0^1 f\big(Y,u_Y(s)\big)\,\text{d}s
    \end{aligned}
  \end{equation}
  where
  \begin{equation}\label{spaces:long}
    \begin{aligned}
      \X\coloneqq \{ \Phi\in H^1(I;\mathds{R}^c),\; \Phi(0) = y_0 \},\quad
      \Y\coloneqq L^2(I;\mathds{R}^c),  
    \end{aligned}
  \end{equation}
  and where, for a fixed value $Y\in\mathds{R}^c$, the periodic fast
  scale solutions $u_Y$ are defined  on $I^P=[0,1]$ by 
  \begin{equation}\label{var:fast}
    \begin{aligned}
      u_Y\in \V^\pi:\quad 
      B(Y;u_Y;\phi) &:
      \int_0^1 \big( u'_Y(t)- g\big(t,Y,u_Y(t)\big)\phi(t)\,\text{d}t
      =0\quad \forall \phi\in \W,
    \end{aligned}
  \end{equation}
  with test and trial spaces defined as
  \begin{equation}\label{spaces:fast}
    \V^\pi\coloneqq \{\phi\in H^1(I^P;\mathds{R}^d),\; \phi(0)=\phi(1)\},\quad
    \W \coloneqq L^2(I^P;\mathds{R}^d). 
  \end{equation}
\end{problem}

\section{Discretization and Solution}
\label{sec:disc}

Discretization of~(\ref{prob:4}) is based on a temporal Galerkin scheme of Problem~\ref{primal:problem}. For general literature on temporal Galerkin formulations we refer to~\cite{Thomee1997,ErikssonEstepHansboJohnson1995}. Both long term and short term problem are discretized with continuous and piecewise linear functions using piecewise constant test functions with possible discontinuities at the discrete time steps. This results in a time stepping scheme which is of second order and which is, up to numerical quadrature, equivalent to the trapezoidal rule.
We introduce the partitioning $I_K$ of $I=[0,T]$ by
\begin{equation}\label{partitioningK}
  0=T_0<T_1<\cdots<T_N = T,\quad K_n:=T_n-T_{n-1},\quad
  I_n\coloneqq (T_{n-1},T_n]
\end{equation}
and define the discrete subspaces 
\begin{equation}\label{spaces:slow:disc}
  \begin{aligned}
    \X_K&\coloneqq \{\Psi\in C(\bar I)^c\,:\, \Psi\big|_{I_n}\in
    P^1(I_n;\mathds{R}^c),\; n=1,\dots,N,\; \Psi(0)=y_0\}\subset \X\\
    \Y_K&\coloneqq \{\Phi\in L^2(I;\mathds{R}^c)\,:\, \Phi\big|_{I_n}\in P^0(I_n;\mathds{R}^c),
    \; n=1,\dots,N\}
    \subset \Y,
  \end{aligned}
\end{equation}
where we denote by $P^r(I)=\operatorname{span}\{1,t,\dots,t^r\}$ the
space of polynomials up to degree $r\in\mathds{N}$. 
Likewise, for discretization of the micro problems~(\ref{var:fast}) we introduce
partitionings $I^P_n$ of $I^P=[0,1]$ by defining
\begin{equation}\label{partitioningk}
  0=t_n^0<t_n^1<\cdots < t_n^{M_n}=1,\quad
  k_n\coloneqq t_n^m-t_n^{m-1},\quad I^P_{n,m}\coloneqq (t_n^{m-1},t_n^m].
\end{equation}
While we allow for different micro discretizations in each
macro step $n=1,\dots,N$, we assume that each of them is uniform with step size $k_n$. 
We introduce
\begin{equation}\label{spaces:fast:disc}
  \begin{aligned} 
    \V_{k;n}^\pi&\coloneqq \{\phi\in C(I^P;\mathds{R}^d):\phi\big|_{I^P_{n,m}}\in P^1(I^P_{n,m})^d,\;
    m=1,\dots,M_n,\; \phi(1)=\phi(0) \}\subset \V^\pi,\\
    \W_{k;n}&\coloneqq \{\phi\in L^2(I^P;\mathds{R}^d):\phi\big|_{I^P_{n,m}}\in P^0(I^P_{n,m})^d,\;
    m=1,\dots,M_n\} \subset \W. 
  \end{aligned}
\end{equation}
Mostly, we will skip the index $n$ if we refer to these micro spaces. 
Discretization is accomplished by restricting trial and test functions to the discrete function spaces $\X_K,\Y_K$ and $\V_{k;n}^\pi,\W_{k;n}$, respectively.

For general right hand sides $f(\cdot)$ and $g(\cdot)$, the integrals
appearing in the variational formulations~(\ref{var:long})
and~(\ref{var:fast}) cannot be evaluated exactly, since the trial
spaces $\X_K$ and $\V_{k;n}^\pi$ are piecewise linear and these
functions are potentially nonlinear. Instead, we numerically approximate them with a summed two-point Gaussian quadrature rule, which we define by
\begin{equation}\label{quadrature}
  \int_{I_K} f(t)\,\mathrm{d}t \coloneqq
  \sum_{n=1}^N \frac{K_n}{2}
  \Big(
  f\big(\bar T_n-\frac{K_n}{\sqrt{12}}\big)+
  f\big(\bar T_n+\frac{K_n}{\sqrt{12}}\big)
  \Big),\quad \bar T_n\coloneqq \frac{T_{n-1}+T_n}{2}. 
\end{equation}
Integration on each micro partitioning $I^P_n$ is defined in the same spirit.
This quadrature rule is of fourth order, see~\cite{StoerBulirsch2002},
and it guarantees the additional higher order consistency error ${\cal O}(\epsilon K^4+k_n^4)$ in  contrast to all other error terms which are of order two. Hence from here on we will neglect the conformity error coming from numerical quadrature on both scales. 

Altogether, the fully discrete multiscale solution is described by the following problem formulation:
\begin{problem}[Discretized variational formulation of the multiscale
    problem]\hfill
  
  \label{primaldisc:problem} Find $Y_{K,k}\in \X_K$ such that 
  \begin{equation}\label{vardisc:long}
    \begin{aligned}
      A_{k}(Y_{K,k},\Phi) &= 0 \quad \forall \Phi\in \Y_K,\\
      A_k(Y,\Phi) \coloneqq
      \int_0^T \big( Y'(t)-\epsilon{\cal F}_k\big(Y(t)\big)\big)&\cdot
      \Phi(t)\,\mathrm{d}t,\quad 
          {\cal F}_k(Y) \coloneqq \int_0^1 f\big(Y,u_{k;Y}(s)\big)\,\mathrm{d}s
    \end{aligned}
  \end{equation}
  where $\X_K,\Y_K$ are given in~(\ref{spaces:slow:disc}). 
  For $t\in I_n$ and $Y=Y(t)\in\mathds{R}^c$ fixed, the discrete periodic fast scale solution $u_{k;Y}\in \V_{k;n}^\pi$ is defined by 
  \begin{equation}\label{vardisc:fast}
    u_{k;Y}\in \V^\pi_{k;n}:\;
    B(Y;u_{k;Y};\phi_k) = 0\quad \forall \phi_k\in \W_{k;n},
  \end{equation}
  where the function spaces $\V^\pi_{k;n}$ and $\W_{k;n}$ are given in~(\ref{spaces:fast:disc}) and $B(\cdot)$ in~(\ref{var:fast}). 
\end{problem}

\begin{remark}[Efficiency of Galerkin discretizations]
  The approximation of Galerkin time discretizations with high order quadrature rules (the approach that we describe) causes additional effort since at least two evaluations of all nonlinear operators and functions are required in each step. The closely related trapezoidal rule would only require one evaluation. However, the consistency error between the Galerkin approach and the trapezoidal rule is of the same order such that both approaches must be considered as separate discretization schemes. In~\cite{MeidnerRichter2014,MeidnerRichter2015} we have demonstrated how the more efficient trapezoidal rule can be used for solving the problem while including the consistency error within the error estimator. It shows that  the quadrature error is indeed of the same (or even higher) order than further contributions to the error estimator. 
\end{remark}


We conclude by summarizing the algorithmic realization of the
multiscale process. The discrete variational formulation~(\ref{vardisc:long}) decouples into separate time steps on $I_n=[T_{n-1},T_n]$. On each interval it remains to solve a nonlinear problem for $Y_n\in P^1(I_n;\mathds{R}^c)$  
\begin{equation}\label{disc:step}
  Y_n(T_n)\cdot\Phi_n-\epsilon\int_{T_{n-1}}^{T_n}{\cal F}_k\big(Y_n(t)\big)\,\text{d}t\cdot \Phi_n = Y_{n-1}(T_{n-1})\cdot \Phi_n\quad \forall \Phi_n\in P^0(I_n;\mathds{R}^c). 
\end{equation}
Here, the integral is approximated by a two point Gaussian quadrature rule. In each quadrature point it is necessary to solve the periodic in time micro problem for $Y_{n,q}=Y_n(\chi_{n,q})$, $\chi_{n,q}\in I_n$ being, for $q=1,2$, the two Gauss points. The corresponding solutions $u_{Y_{n,q}}(t)$ are given by~(\ref{vardisc:fast}) and their approximation requires the solution of a periodic problem, see Remark~\ref{rem:periodic}.

To solve~(\ref{disc:step}), we use an approximate Newton scheme. We approximate the Jacobian by neglecting the derivatives of the transfer operator ${\cal F}$ with respect to the micro solution, i.e. we approximate
\[
\tilde \nabla_Y {\cal F}(Y)(\delta Y) := \int_0^1 \nabla_Yf\big( Y,u_Y(s)\big)\cdot \delta Y\,\text{d}s,
\]
and do not consider the partial derivative $\nabla_u f(Y,u_Y)\nabla_Y u_y$ which would require the approximation of a further periodic in time tangent problem. Numerical tests have shown that this approximation does not strongly worsen the convergence rate of the Newton scheme.

\begin{remark}[Approximation of periodic solutions]\label{rem:periodic}
  In each Gaussian quadrature point $\chi_{n,q}\in [T_{n-1},T_n]$,
  periodic-in-time micro problems $u_{k;Y_{n,q}}(t)$ must be computed
  for the fixed slow scale variable $Y_{n,q}:=Y_K(\chi_{n,q})$. These are approximated until the periodicity mismatch $\|u_{k;Y_{n,q}}(1)-u_{k;Y_{n,q}}(0)\|< tol_P$. This can either be done by simply letting the dynamic problem run into a cyclic state or by using different acceleration schemes, see~\cite{RichterWollner2018,Richter2020}. 
\end{remark}

\section{Error estimation}\label{sec:error}
We follow the framework of the dual weighted residual estimator (DWR) introduced in~\cite{BeckerRannacher1995,BeckerRannacher2001}. We are interested in functional outputs $J:\X\to\mathds{R}$ of the long scale problem.
We aim at estimating the functional error $J(y)-J(Y_{K,k})$ between the analytic solution $y(t)$ given by~(\ref{modelproblem})-(\ref{reaction}) and the fully discrete multiscale approximation defined in Problem~\ref{primaldisc:problem}. In between, we must consider several approximation steps:
\begin{enumerate}
\item The averaging error \EA\ introduced by deriving the averaged
  model problem, Problem~\ref{problem:multiscale}
  \[
  J(y)-J(Y_{K,k}) = \underbrace{\big(J(y) - J(Y)\big)}_{\EA} +
  \underbrace{\big(J(Y)-J(Y_{K,k})\big)}_{\ED}, 
  \]
  and the remaining discretization error \ED, which is further split.
\item The error from Galerkin discretization \EG\ of the averaged long term
  problem 
  \[
  J(y)-J(Y_{K,k}) = \underbrace{\big(J(y) - J(Y)\big)}_{\EA}
  + \underbrace{\big(J(Y)-J(Y_{K})\big)}_{\EG}
  + \underbrace{\big(J(Y_K)-J(Y_{K,k})\big)}_{\EF}
  \]
  which also reveals \EF, the error coming from discretizing the fast
  scale problem. 
\end{enumerate}
By $Y_K$ we define the solution to the  semidiscrete problem,
which is discrete in terms of the long scale, e.g. $Y_K\in \X_K$, but
which is based on the analytic transfer operator ${\cal F}$. This
intermediate solution will enter the estimate as an analytical tool
only.  

While the averaging error \EA\ is bound by the a priori estimate in Theorem~\ref{thm:error}, the remaining errors $\ED=\EG+\EF$ can be formulated as residual errors of a non conforming Galerkin formulation. Non conformity comes from the approximation of the transfer operator ${\cal F}$ by ${\cal F}_k$. As outlined above, we have neglected the error coming from Gaussian quadrature since it is negligible. 

The general framework of the dual weighted residual error estimator for such a non conforming discretization is discussed in \cite[Section 2.3]{BeckerRannacher2001} or \cite[Theorem 8.7]{Richter2017}.
An application to the multiscale scheme will require a nested application of the DWR method to also take care of the error coming from approximating the transfer operator ${\cal F}$ which implicitly depends on the fast scale contributions.
We state the main result.
\begin{theorem}[DWR estimator for the long term problem]
  \label{theorem:dwr}
  Let $I=[0,T]$ and let $y\in C^1(I;\mathds{R}^c)$ be the solution to~(\ref{modelproblem})-(\ref{reaction}) and $Y_{K,k}\in \X_K$ be the fully discrete solution to  Problem~\ref{primaldisc:problem}. Let $tol_P>0$ be the tolerance for the approximation of the temporal periodicity in all micro problems, i.e. $\|u_Y(1)-u_Y(0)\|<tol_P$. Let $J:\X\to\mathds{R}$ be three times differentiable. It holds
  \begin{multline}\label{errorestimate}
    J(y)-J(Y_{K,k}) = 
    -\frac{1}{2} A_k(Y_{K,k},Z-i_\Y Z) \\
    +\frac{1}{2}\Big(
    J'(Y_{K,k})(Y-i_\X Y) - A'_k(Y_{K,k})(Z_{K,k},Y-i_\X Y) \Big)\\
    +\frac{1}{2}\epsilon\int_0^T \eta^\pi\big(Y_{K,k}(t)\big)\cdot
    \big( Z(t)+Z_{K,k}(t)\big)\,\text{d}s\\
    +{\cal O}(tol_P) + {\cal O}(\epsilon)
    +{\cal O}(\epsilon k^2 + k^4 + K^4)
    +{\cal R}_K^{(3)} + {\cal R}_k^{(3)},
  \end{multline}
  where $i_\X:\X\to \X_K$ is the nodal interpolation into the space of
  piecewise linear polynomials and $i_\Y:\Y\to \Y_K$ is the projection
  to the piecewise constants, $Z\in \Y$ and $Z_{K,k}\in \Y_K$ are the adjoint solutions to
  $A'(Y)(\Phi,Z) = J'(Y)(\Phi)$ for all $\Phi\in \X^0$ and $\Phi_K\in
  \X_K^0$, respectively. $\X^0$ and $\X^0_K$ differ from $\X$ and
  $\X_K$ in the sense that homogeneous initial values are realized. 
  The fast scale error $\eta^\pi(Y_{K,k})$ is given by
  \begin{multline}\label{micro:eta}
    \eta^\pi(Y)\coloneqq   {\cal O}(tol_P) 
    +\frac{1}{2}\Big( G(z_Y-i_\W z_Y) - B(u_{k;Y}, z_Y-i_\W z_Y) \Big)\\
    + \frac{1}{2}\Big(J^{\pi'}(u_{k;Y})(u-i_\V u)-
    B'(u_{k;Y})(u-i_\V u, z_{k;Y})
    \Big)
  \end{multline}
  and the adjoint micro scale solutions 
  $z_Y\in \W$ and
  $z_{k;Y}\in \W_k$ are defined for each fixed $Y$ by
  $B'(u_Y)(\phi,z_Y)=\int_0^1\nabla_u f\big(Y,u_Y(s)\big)\phi(s)\,\text{d}s$ for all $\phi\in \V^\pi$ and $\phi_k\in \V_k^\pi$, respectively.
  $i_\V:\V^\pi\to \V_k^\pi$ and $i_\W:\W\to \W_k$ are interpolation operators.  By  ${\cal R}_K^{(3)}$ and ${\cal R}_k^{(3)}$ we denote remainders which are of third order in the error. 
\end{theorem}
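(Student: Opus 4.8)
The plan is to split the total error along the chain of approximations announced before the theorem and to treat each link with a symmetric (trapezoidal-type) dual weighted residual identity, nesting a second such identity for the fast scale. First I would isolate the averaging error $\EA = J(y)-J(Y)$: since $J$ is differentiable and hence locally Lipschitz on the bounded set $X^c$, Theorem~\ref{thm:error} gives $\|y-Y\|=\mathcal{O}(\epsilon)$ and therefore $\EA=\mathcal{O}(\epsilon)$, which is one source of the $\mathcal{O}(\epsilon)$ remainder. It then remains to represent the discretization error $\ED=J(Y)-J(Y_{K,k})$.

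For $\ED$ I would follow the Lagrangian form of the DWR method as in \cite[Section 2.3]{BeckerRannacher2001} and \cite[Theorem 8.7]{Richter2017}. Introduce the Lagrangians $\mathcal{L}(Y,Z)=J(Y)-A(Y,Z)$ and $\mathcal{L}_k(Y,Z)=J(Y)-A_k(Y,Z)$, whose stationarity conditions are exactly the primal problems (\ref{var:long}), (\ref{vardisc:long}) together with the adjoint problems defining $Z$ and $Z_{K,k}$. Because $A(Y,Z)=0$ and $A_k(Y_{K,k},Z_{K,k})=0$ at the respective stationary points, $\ED=\mathcal{L}(Y,Z)-\mathcal{L}_k(Y_{K,k},Z_{K,k})$. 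I would split this as the conforming difference $\mathcal{L}(Y,Z)-\mathcal{L}(Y_{K,k},Z_{K,k})$ plus the consistency gap $(A_k-A)(Y_{K,k},Z_{K,k})$. The trapezoidal DWR identity applied to the first piece produces the primal residual weighted by $Z-i_\Y Z$ and the dual residual weighted by $Y-i_\X Y$, each carrying the factor $\tfrac12$ and a third-order remainder ${\cal R}_K^{(3)}$; Galerkin orthogonality lets me rewrite the primal residual as $-\tfrac12 A_k(Y_{K,k},Z-i_\Y Z)$ up to the consistency gap, which is precisely the first two lines of (\ref{errorestimate}).

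The heart of the argument, and the main obstacle, is the consistency gap, because $A_k-A$ stems entirely from replacing the exact transfer operator ${\cal F}$ by ${\cal F}_k$, and ${\cal F}$ is defined only implicitly through the periodic micro problem (\ref{var:fast}). I would write $(A_k-A)(Y_{K,k},\Phi)=\epsilon\int_0^T\big({\cal F}(Y_{K,k}(t))-{\cal F}_k(Y_{K,k}(t))\big)\cdot\Phi(t)\,\text{d}t$ and recognize the integrand difference as a functional error $J^\pi(u_Y)-J^\pi(u_{k;Y})$ on the fast scale, with the micro functional $J^\pi(u):=\int_0^1 f\big(Y,u(s)\big)\,\text{d}s$. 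A second, nested symmetric DWR for the micro problem $B$ — using the micro adjoint $z_Y$ solving $B'(u_Y)(\phi,z_Y)=J^{\pi\prime}(u_Y)(\phi)$ and its discrete counterpart $z_{k;Y}$ — then yields the fast-scale indicator $\eta^\pi(Y)$ of (\ref{micro:eta}), again built from $\tfrac12$-weighted primal and dual micro residuals plus a remainder ${\cal R}_k^{(3)}$. Since the consistency gap is evaluated inside the symmetric estimator, its macro weight is the average of continuous and discrete adjoints, which explains why $\tfrac12\big(Z+Z_{K,k}\big)$ multiplies $\epsilon\,\eta^\pi(Y_{K,k})$ in the third line of (\ref{errorestimate}).

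Two bookkeeping points close the proof. The periodic micro solutions are only computed up to the mismatch $\|u_Y(1)-u_Y(0)\|<tol_P$ (Remark~\ref{rem:periodic}), so exact Galerkin orthogonality on the fast scale fails by terms of size $\mathcal{O}(tol_P)$ at both DWR levels, which is the origin of the $\mathcal{O}(tol_P)$ contributions in (\ref{micro:eta}) and (\ref{errorestimate}). Finally, replacing the purely semidiscrete weight $Y_K$ by the available $Y_{K,k}$ and discarding the neglected Gaussian quadrature consistency error (of order $\mathcal{O}(\epsilon K^4+k^4)$, together with the cross term $\mathcal{O}(\epsilon k^2)$ arising when the $\epsilon$-scaled macro residual is coupled to the micro interpolation error) are lower-order modifications that I would absorb into the stated $\mathcal{O}$-remainders, while all cubic terms from the two Taylor expansions of the Lagrangians accumulate in ${\cal R}_K^{(3)}$ and ${\cal R}_k^{(3)}$. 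Collecting these with the averaging error $\EA=\mathcal{O}(\epsilon)$ yields exactly (\ref{errorestimate}).
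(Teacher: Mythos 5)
Your proposal is correct and takes essentially the same route as the paper: averaging error via Theorem~\ref{thm:error}, the Lagrangian split $\Ea+\Eb$ with the trapezoidal DWR identity and Galerkin orthogonality (the paper's Lemma~\ref{dwr:long}), the nested micro-scale DWR for the primal conformity gap producing $\eta^\pi$ and the $\tfrac12(Z+Z_{K,k})$ weight (Lemma~\ref{lemma:primal}), and the ${\cal O}(tol_P)$ terms from the disturbed periodicity. The only place you are thinner than the paper is the adjoint conformity term $[A'_k-A'](Y_{K,k})(Z_{K,k},Y-Y_{K,k})$, which you absorb as an ${\cal O}(\epsilon k^2)$ cross term by assertion, whereas the paper's Remark~\ref{remark:adjointconsistency} justifies this order via the a priori bound $\|Y-Y_{K,k}\|_{L^\infty(I)}={\cal O}(\epsilon+\epsilon^2K^2+k^2)$ and boundedness of the continuous adjoint solution.
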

\begin{proof}
  The proof follows by combining Theorem~\ref{thm:error}, 
  Lemma~\ref{dwr:long},~\ref{lemma:primal} and Remark~\ref{remark:adjointconsistency}. Details on the adjoint problems are given in Section~\ref{sec:evaluation}. 
\end{proof}


\subsection{Derivation of the error estimator}

The averaging error \EA\ $J(y)-J(Y)$ is estimated by a priori arguments. Given a differentiable functional $J(y)$ it holds with Theorem~\ref{thm:error} that
\[
|\EA| = |J(y)-J(Y)| = |J'(\zeta)(y-Y)|
\le \|J'(\zeta)\|\cdot \|y-Y\|
= {\cal O}(\epsilon),
\]
where $\zeta$ is an intermediate between $y$ and $Y$. 
We turn our attention to the Galerkin error \EG\ estimating $J(Y)-J(Y_{K,k})$. 
\begin{lemma}[DWR estimator of the averaged long term problem]
  \label{dwr:long}\

  \noindent
  Let $Y\in \X$ be the solution to Problem~\ref{primal:problem} and
  $Y_{K,k}\in \X_K$ be the solution to
  Problem~(\ref{primaldisc:problem}), $Z\in \Y$ and $Z_{K,k}\in \Y_K$ the adjoint solutions to~(\ref{de:1}). It holds
  \begin{multline}\label{error1}
    J(Y)-J(Y_{K,k}) = {\cal R}^{(3)}_K(Y-Y_{K,k},Z-Z_{K,k})\\ 
    +\frac{1}{2} \Big(
    J'(Y_{K,k})(Y-i_{\X}Y)-A'_k(Y_{K,k})(Z_{K,k},Y-i_{\X} Y)
    - A_{k}(Y_{K,k},Z-i_{\Y}Z)
    \Big)\\
    \qquad 
    +\frac{1}{2}\Big(
    [A'_k-A'](Y_{K,k})(Z_{K,k},Y-Y_{K,k})+
    [A_k-A](Y_{K,k},Z+Z_{K,k})    \Big),
  \end{multline}
  where ${\cal R}_K^{(3)}$ is of third order in the primal and adjoint
  discretization error.
  %
\end{lemma}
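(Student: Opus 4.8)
The plan is to run the Lagrangian form of the dual weighted residual calculus in its nonconforming variant, following \cite[Section~2.3]{BeckerRannacher2001} and \cite[Theorem~8.7]{Richter2017}. First I would introduce the continuous and discrete Lagrangians
\[
{\cal L}(Y,Z) \coloneqq J(Y) - A(Y,Z), \qquad {\cal L}_k(Y,Z) \coloneqq J(Y) - A_k(Y,Z),
\]
and record the stationarity facts to be used: the continuous pair $(Y,Z)$ is a critical point, ${\cal L}'(Y,Z)=0$, encoding both the primal equation~(\ref{var:long}) and the adjoint equation $A'(Y)(\Phi,Z)=J'(Y)(\Phi)$; the discrete pair $(Y_{K,k},Z_{K,k})$ satisfies the Galerkin relations $A_k(Y_{K,k},\Phi_K)=0$ for all $\Phi_K\in\Y_K$ and $A_k'(Y_{K,k})(\Phi_K,Z_{K,k})=J'(Y_{K,k})(\Phi_K)$ for all $\Phi_K\in\X_K^0$. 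Because $A(Y,Z)=0$ and $A_k(Y_{K,k},Z_{K,k})=0$, one has $J(Y)={\cal L}(Y,Z)$ and $J(Y_{K,k})={\cal L}_k(Y_{K,k},Z_{K,k})$, so the functional error splits as
\[
J(Y)-J(Y_{K,k}) = \big[{\cal L}(Y,Z)-{\cal L}(Y_{K,k},Z_{K,k})\big] + [A_k-A](Y_{K,k},Z_{K,k}),
\]
where the second term is the consistency defect from evaluating the continuous form at the discrete solution. Carrying this defect correctly through the computation is exactly what fixes the coefficients in the final estimate.

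Next I would apply the trapezoidal rule with cubic remainder to $f(s)\coloneqq{\cal L}\big(x_{K,k}+s\,e\big)$ on $[0,1]$, where $x_{K,k}=(Y_{K,k},Z_{K,k})$ and $e=(Y-Y_{K,k},Z-Z_{K,k})$. The identity
\[
f(1)-f(0)=\tfrac12\big(f'(0)+f'(1)\big)+\tfrac12\int_0^1 f'''(s)\,s(s-1)\,\mathrm{d}s,
\]
together with $f'(1)={\cal L}'(Y,Z)(e)=0$, gives
\[
{\cal L}(Y,Z)-{\cal L}(Y_{K,k},Z_{K,k})=\tfrac12\,{\cal L}'(Y_{K,k},Z_{K,k})(e)+{\cal R}_K^{(3)},
\]
with ${\cal R}_K^{(3)}$ trilinear in $e$ and controlled by the assumed third differentiability of $J$ and the smoothness of $f,g$ entering $A$; this is the remainder in~(\ref{error1}). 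Writing out the derivative,
\[
{\cal L}'(Y_{K,k},Z_{K,k})(e)=J'(Y_{K,k})(Y-Y_{K,k})-A'(Y_{K,k})(Y-Y_{K,k},Z_{K,k})-A(Y_{K,k},Z-Z_{K,k}),
\]
produces exactly the adjoint-residual and primal-residual terms that drive the estimate.

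The decisive step is to pass from the continuous forms $A,A'$ to the computable $A_k,A_k'$ while invoking Galerkin orthogonality. In the primal-residual term I would write $-A(Y_{K,k},Z-Z_{K,k})=-A_k(Y_{K,k},Z-Z_{K,k})+(A_k-A)(Y_{K,k},Z-Z_{K,k})$ and, since $i_\Y Z-Z_{K,k}\in\Y_K$, drop the interpolant contribution by the discrete primal equation, leaving the weight $Z-i_\Y Z$. In the adjoint-residual term I would replace $A'$ by $A_k'$ at the price of the correction $(A_k'-A')(Y_{K,k})(Y-Y_{K,k},Z_{K,k})$ and then, since $i_\X Y-Y_{K,k}\in\X_K^0$, use the discrete adjoint equation to reduce the weight $Y-Y_{K,k}$ to $Y-i_\X Y$ (here I use that $A'$ is bilinear in its two slots, so the dual weight may be placed in either argument, matching the ordering in~(\ref{error1})). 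I expect this bookkeeping to be the main obstacle — not conceptually, but because the signs and the splittings $Y-Y_{K,k}=(Y-i_\X Y)+(i_\X Y-Y_{K,k})$ and $Z-Z_{K,k}=(Z-i_\Y Z)+(i_\Y Z-Z_{K,k})$ must be tracked exactly for the orthogonalities to apply.

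Finally I would collect the nonconformity corrections. The piece $\tfrac12(A_k-A)(Y_{K,k},Z-Z_{K,k})$ generated above and the defect $[A_k-A](Y_{K,k},Z_{K,k})$ from the initial split recombine, using $Z-Z_{K,k}=(Z+Z_{K,k})-2Z_{K,k}$, into the symmetric weight $\tfrac12(A_k-A)(Y_{K,k},Z+Z_{K,k})$; together with $\tfrac12(A_k'-A')(Y_{K,k})(Y-Y_{K,k},Z_{K,k})$ this is precisely the nonconformity line of~(\ref{error1}). The surviving terms $\tfrac12\big(J'(Y_{K,k})(Y-i_\X Y)-A_k'(Y_{K,k})(Y-i_\X Y,Z_{K,k})-A_k(Y_{K,k},Z-i_\Y Z)\big)$ form the middle line, and ${\cal R}_K^{(3)}$ the first line, completing the representation. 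The one delicate point is exactly the coefficient of the $Z_{K,k}$-contribution: without the defect term it would land with weight $-\tfrac32$ instead of $+\tfrac12$, so the whole identity hinges on step one being carried through unsimplified.
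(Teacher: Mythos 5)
Your proposal is correct and follows essentially the same route as the paper: the same Lagrangian splitting into a standard DWR term plus the conformity defect $[A_k-A](Y_{K,k},Z_{K,k})$, the same trapezoidal-rule identity with vanishing derivative at the continuous solution yielding the third-order remainder, the same insertion of $\pm A_k$, $\pm A_k'$ with Galerkin orthogonality to produce the interpolation weights, and the same recombination into the symmetric weight $\tfrac12[A_k-A](Y_{K,k},Z+Z_{K,k})$. The only quibble is in your closing aside: dropping the defect term would change the $Z_{K,k}$-weight in the conformity term from $+\tfrac12$ to $-\tfrac12$, not to $-\tfrac32$, but this does not affect the derivation itself.
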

\begin{proof}
  We introduce one Lagrangian for the continuous and the semidiscrete
  model and one for the fully discrete model
  \[
  L(Y,Z)\coloneqq J(Y)-A(Y,Z),\quad L_k(Y,Z)\coloneqq J(Y)-A_k(Y,Z). 
  \]
  For the solutions $Y\in \X$ and $Y_{K,k}\in \X_K$ it holds for all
  $Z\in \Y$ and $Z_K\in \Y_K$ that
  \begin{equation}\label{DWR:2}
    \begin{aligned}
      J(Y)-J(Y_{K,k}) &= L(Y,Z)-L_k(Y_{K,k},Z_K)\\
      &= \underbrace{L(Y,Z)-L(Y_{K,k},Z_K)}_{=\Ea}+
      \underbrace{L(Y_{K,k},Z_K)-L_k(Y_{K,k},Z_K)}_{=\Eb}. 
    \end{aligned}
  \end{equation}
  The first part \Ea\ is a standard DWR term, which by 
  defining $\xt\coloneqq(Y,Z)$ and $\xt_{K,k}\coloneqq(Y_{K,k},Z_K)$, 
  is approximated by writing the difference as integral over its 
  derivative and by approximation with the trapezoidal rule, compare \cite[Proposition 2.1]{BeckerRannacher2001}
  \[
  \Ea=L(\xt)-L(\xt_{K,k}) = \frac{1}{2}\Big(L'(\xt)(\xt-\xt_{K,k})
  +L'(\xt_{K,k})(\xt-\xt_{K,k})\Big) + {\cal R}^{(3)}_K(\xt-\xt_{K,k}),
  \]
  where the remainder ${\cal R}_K^{(3)}(\xt-\xt_{K,k})$ is of third order in the error. The relation
  \[
  L'(\xt)(\delta \xt) = J'(Y)(\delta Y)-A'(Y,Z)(\delta Y) - A(Y,\delta
  Z) 
  \]
  shows that it holds $L'(\xt)(\delta \xt)=L'(Y,Z)(\delta Y,\delta Z)=0$
  for the analytical solutions $Y,Z\in \X\times \Y$ and for all $\delta\xt=(\delta Y,\delta Z)\in \X\times \Y$. We neglect the remainder ${\cal R}^{(3)}$ and approximate
  \begin{equation}\label{DWR:3}
    \Ea \approx \frac{1}{2} \Big(
    J'(Y_{K,k})(Y-Y_{K,k})-A'(Y_{K,k})(Z_{K,k},Y-Y_{K,k})
    - A(Y_{K,k},Z-Z_K)\Big).
  \end{equation}
  The forms $A(\cdot,\cdot)$ and $A'(\cdot)(\cdot,\cdot)$ are based on
  the non-discrete transfer operator ${\cal F}$. The discrete primal
  and adjoint solutions $Y_{K,k}\in 
  \X_K$ and $Z_K\in \Y_K$ are however defined by
  using the discrete form $A_k(\cdot,\cdot)$. We insert
  $\pm A_k$ and $\pm A'_k$ in~(\ref{DWR:3}) such  that we can apply
  Galerkin orthogonality to introduce interpolations $i_\X:\X\to \X_K$
  and $i_\Y:\Y\to \Y_K$
  \begin{equation}\label{DWR:4}
    \begin{aligned}
      \Ea &\approx \frac{1}{2} \Big(
      J'(Y_{K,k})(Y-i_{\X}Y)-A'_k(Y_{K,k})(Z_{K,k},Y-i_{\X} Y)
      - A_k(Y_{K,k},Z-i_{\Y}Z)
      \Big)\\
      &\qquad 
      +\frac{1}{2}\Big(
      [A'_k-A'](Y_{K,k})(Z_{K,k},Y-Y_{K,k})+
      [A_k-A](Y_{K,k},Z-Z_K)    \Big). 
    \end{aligned}
  \end{equation}
  The notation $[A_k-A](Y,Z)\coloneqq A_k(Y,Z)-A(Y,Z)$ is introduced
  for brevity.
  In~(\ref{DWR:2}), the second error component \Eb\ is a conformity
  error and given as
  \[
  \Eb = [A_k-A](Y_{K,k},Z_K). 
  \]
  Together with~(\ref{DWR:4}) we obtain the postulated result.  
\end{proof}
The second line of~(\ref{error1}) is the standard residual representation of the DWR error estimator. Given a reconstruction of the \emph{weights} $Y-i_\X Y$ and $Z-i_\Y Z$ it can be evaluated numerically, we refer to Section~\ref{sec:evaluation} for details. The third line in~(\ref{error1}) combines two conformity errors coming from the replacement of the transfer operator ${\cal F}$ by its discrete counterpart ${\cal F}_k$. These terms will be discussed in the following paragraphs. 
\begin{lemma}[Primal conformity error]
  \label{lemma:primal}
  Let the assumptions of Theorem~\ref{theorem:dwr} hold. For the
  primal conformity error it holds
  \[
    [A_k-A](Y_{K,k},Z+Z_{K,k})=\epsilon
    \int_0^T \eta^\pi\big(Y_{K,k}(t)\big)\cdot
    \big(Z(t)+Z_{K,k}(t)\big)\,\text{d}t
    +\epsilon \int_0^T {\cal R}_k^{(3)}\big(Y_{K,j}(t)\big)\,\text{d}t,
    \]
    where $\eta^\pi(Y)$ is defined in~(\ref{micro:eta}) and where ${\cal R}_k^{(3)}$ is a remainder of third order in the error. 
\end{lemma}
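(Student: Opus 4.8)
The plan is to reduce the primal conformity error to a goal-oriented discretization error of the isolated periodic micro problem and then to estimate that error by a second, nested application of the dual weighted residual method, in complete analogy with Lemma~\ref{dwr:long}. First I would evaluate $[A_k - A](Y_{K,k}, Z + Z_{K,k})$ directly from the definitions~(\ref{var:long}) and~(\ref{vardisc:long}). The two forms differ only through the transfer operator, so the $Y'$ contributions cancel and one is left with
\[
[A_k - A](Y_{K,k}, Z + Z_{K,k}) = \epsilon \int_0^T \big({\cal F}(Y_{K,k}(t)) - {\cal F}_k(Y_{K,k}(t))\big)\cdot\big(Z(t) + Z_{K,k}(t)\big)\,\text{d}t.
\]
The whole statement then reduces to proving, for each fixed $t$ with $Y := Y_{K,k}(t)$, that the pointwise factor ${\cal F}(Y) - {\cal F}_k(Y)$ equals $\eta^\pi(Y)$ up to a third-order remainder ${\cal R}_k^{(3)}(Y)$.

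Next I would recognize this factor as a micro-scale functional error. For fixed $Y$, introduce the micro functional $J^\pi(v) := \int_0^1 f(Y, v(s))\,\text{d}s$; then, by~(\ref{var:long}) and~(\ref{vardisc:long}), ${\cal F}(Y) = J^\pi(u_Y)$ and ${\cal F}_k(Y) = J^\pi(u_{k;Y})$, where $u_Y \in \V^\pi$ and $u_{k;Y} \in \V_k^\pi$ solve the continuous and discrete periodic micro problems~(\ref{var:fast}) and~(\ref{vardisc:fast}). Hence the factor is exactly $J^\pi(u_Y) - J^\pi(u_{k;Y})$, the goal-oriented error of the micro problem for the functional $J^\pi$. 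This is consistent with the micro adjoint being driven by $J^{\pi'}$, since $J^{\pi'}(u_Y)(\phi) = \int_0^1 \nabla_u f(Y, u_Y(s))\phi(s)\,\text{d}s$ matches the right-hand side defining $z_Y$ in Theorem~\ref{theorem:dwr}.

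I would then estimate $J^\pi(u_Y) - J^\pi(u_{k;Y})$ by rerunning the argument of Lemma~\ref{dwr:long} at the micro level under the dictionary $J \leftrightarrow J^\pi$, $A \leftrightarrow B$, $Y \leftrightarrow u$, $Y_{K,k} \leftrightarrow u_{k;Y}$, $Z \leftrightarrow z_Y$, $Z_{K,k} \leftrightarrow z_{k;Y}$, $\X \leftrightarrow \V^\pi$, $\Y \leftrightarrow \W$: introduce the micro Lagrangian, write the difference as the integral of its derivative, apply the trapezoidal rule to obtain the symmetric half-sum of the primal and adjoint residuals at the continuous and discrete solutions, use that the Lagrangian derivative vanishes at the exact micro solution, and insert the interpolations $i_\V, i_\W$ via Galerkin orthogonality. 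This yields precisely the two bracketed residual terms of $\eta^\pi$ in~(\ref{micro:eta}) together with a cubic remainder ${\cal R}_k^{(3)}(Y)$. The final ingredient is the periodicity tolerance: by Remark~\ref{rem:periodic} the discrete micro solution $u_{k;Y}$ is only computed until its periodicity mismatch drops below $tol_P$, so it does not satisfy~(\ref{vardisc:fast}) exactly; the resulting primal-residual defect, which vanishes in the ideal periodic case, enters through the data term $G$ and is controlled by the ${\cal O}(tol_P)$ contribution in $\eta^\pi$. Substituting ${\cal F}(Y) - {\cal F}_k(Y) = \eta^\pi(Y) + {\cal R}_k^{(3)}(Y)$ back into the integral gives the claim.

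The main obstacle is making the nested estimate uniform in $t$: because $Y = Y_{K,k}(t)$ sweeps an entire trajectory, the micro DWR expansion, the interpolation operators, and above all the remainder ${\cal R}_k^{(3)}(Y(t))$ must be controlled uniformly in $Y \in X^c$ so that the $t$-integral is well defined and genuinely of third order. This rests on the smoothness of $f$ and $g$ from Assumptions~\ref{assumption:f} and~\ref{assumption:g}, on the well-posedness and stability of the periodic micro problem~(\ref{ass:perproblem})--(\ref{ass:uclip}), and on the differentiability of $J^\pi$, $B$ and $B'$ in the micro solution. A secondary subtlety is pinning down the periodicity-defect term $G$ precisely, since periodicity is imposed on the trial space $\V^\pi$ rather than as an explicit source, so the $tol_P$ bookkeeping must be tracked through the integration by parts inside $B$.
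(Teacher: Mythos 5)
Your proposal is correct and follows essentially the same route as the paper: cancel the $Y'$ terms to reduce $[A_k-A]$ to the $t$-integral of ${\cal F}-{\cal F}_k$, recognize this pointwise as the micro functional error $J^\pi(u_Y)-J^\pi(u_{k;Y})$, and estimate it by a nested DWR argument (which the paper simply cites from Becker--Rannacher rather than re-deriving as you do), with the ${\cal O}(tol_P)$ term arising from the disturbed Galerkin orthogonality of the approximately periodic discrete micro solution. Your closing remarks on uniformity in $t$ and on the bookkeeping of the periodicity defect are valid refinements that the paper glosses over.
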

\begin{proof}
  For ease of notation we introduce $\tilde Z\coloneqq Z+Z_{K.k}$. This term does not carry any convergence properties and the full order of convergence must be reconstructed from the difference of the two forms $[A_k-A]$. 
  Subtracting~(\ref{var:long}) from~(\ref{vardisc:long}) gives
  \begin{equation}\label{DWR:2a}
    [A_k-A](Y_{K,k},\tilde Z)
    =\epsilon \sum_{n=1}^N\int_{T_{n-1}}^{T_n}
    \Big({\cal F}\big(Y_{K,k}(t)\big)
    -{\cal F}_k\big(Y_{K,k}(t)\big) \Big)\tilde Z(t).
  \end{equation}
  For the evaluation of this term we use a nested application of the DWR estimator, since the difference between the fast scale influences $u_Y$ and $u_{k;Y}$ enters implicitly.\footnote{A practical evaluation of this error term will require numerical quadrature of the integrals on the right hand side, e.g. by the 2-point Gauss rule. The error term ${\cal F}\big(Y_{K,k}(t)\big)-{\cal F}_k\big(Y_{K,k}(t)\big)$ must hence be approximated in two points in each time step $[T_{n-1},T_n]$.}
  
  Now, let $t\in [0,T]$ be fixed and $Y\coloneqq Y_{K,k}(t)$. We introduce
  \begin{equation}\label{DWR:2d}
    J^\pi(u_Y)\coloneqq \int_0^1 f\big(Y,u_Y(s)\big)\,\text{d}s
  \end{equation}
  such that ${\cal F}\big(Y_{K,k}(t)\big)-{\cal F}_k\big(Y_{K,k}(t)\big)=J^\pi(u_Y)-J^\pi(u_{k;Y})$,  where by $u_Y$ we denote the continuous periodic micro solution to~(\ref{var:fast}) and by $u_{k;Y}$ the discrete solution to~(\ref{vardisc:fast}), which satisfies the periodicity approximately, i.e. $\|u_{k;Y}(1)-u_{k;Y}(0)\|<tol_P$.  With the solution $z_{k;Y}\in \W_k$ to the adjoint micro problem
  \[
  B'(u_{k;Y})(\psi_k,z_{k;Y}) = J^{\pi'}(u_Y)(\psi_k)\quad\forall \psi_k\in \V_k^\pi 
  \]
  we estimate in the usual DWR way, see Lemma~\cite[Section 2.3]{BeckerRannacher2001},
  \begin{multline}\label{DWR:2e}
    J^\pi(u_Y)-J^\pi(u_{k;Y})= 
    {\cal R}^{(3)}_k(u_{k;Y})(u_Y-u_{k;Y},    z_Y-z_{k;Y}) 
    -\frac{1}{2}B(u_{k;Y}, z_Y-i_\W z_Y)\\
    + \frac{1}{2}\Big(J^{\pi'}(u_{k;Y})(u_Y-i_\V u_Y)-
    B'(u_{k;Y})(u_Y-i_\V u_Y, z_{k;Y})
    \Big)
    + {\cal O}(tol_P),
  \end{multline}
  where the $tol_P$-term arises from the disturbed Galerkin orthogonality. To clarify the impact of the approximated tolerance we give a sketch: assume that $u_{k;Y}^\pi(t)$ is the fully periodic solution, strictly satisfying $u_{k;Y}^\pi(1)=u_{k;Y}^\pi(0)$. Then, 
  \[
  \begin{aligned}
    B(u_{k;Y};\phi_{K,k}) &=
    \underbrace{B(u_{k;Y}^\pi,\phi_{K,k})}_{=0}
    +B(u_{k;Y}-u_{k;Y}^\pi,\phi_{K,k})\\
    \Rightarrow\quad
    \big|B(u_{k;Y},\phi_{K,k})\big| &\le 
    c\, \|u_{k;Y}-u^\pi_{k;Y}\|_{L^\infty(I^P)}\,
    \|\phi_{K,k}\|_{L^\infty(I^P)}\le 
    c\,tol_P \|\phi_{K,k}\|_{L^\infty(I^P)},
  \end{aligned}
  \]
  where we estimated the periodicity error $\|u_{k;Y}^\pi-u_{k;Y}\|\le tol_P$ by the imposed threshold. 
  
  Details on the adjoint solution  $z_Y=z_Y\in \W$ and its discretization $z_{k;Y}\in \W_k$  entering~(\ref{DWR:2e}) are discussed in Section~\ref{sec:evaluation}. 
\end{proof}

\begin{remark}
  \label{remark:adjointconsistency}
  The adjoint consistency error arising in  Lemma~\ref{dwr:long} can be estimated as 
  \begin{multline}\label{dc:1}
    \Big|[A'_k-A'](Y_{K,k})(Y-Y_{K,k},Z_{K,k})\Big|\\
    \le \epsilon\int_0^T
    \Big|\Big( {\cal F'}_k\big(Y_{K,k}(t)\big)
    - {\cal F'}\big(Y_{K,k}(t)\big)\Big)\cdot  
    \big(Y(t)-Y_{K,k}(t)\big)\cdot Z_{K,k}(t)\Big|\,\text{d}t\\
    \le \epsilon T \|Z_{K,k}\|_{L^\infty(I)}
    \| {\cal F}'_k\big(Y_{K,k}\big)-{\cal F}'\big(Y_{K,k}\big)\|_{L^\infty(I)}
    \| Y-Y_{K,k} \|_{L^\infty(I)}
  \end{multline}
  In~\cite{FreiRichter2020} we have shown a second order error estimator for the primal error in a comparable multiscale setting
  \[
  \| Y-Y_{K,k} \|_{L^\infty(I)} = {\cal O}\Big(  \epsilon + \epsilon^2K^2 + k^2\Big). 
  \]
  The first term in~(\ref{dc:1}) is bounded, since the adjoint
  problem $A'(Y)(\Phi,Z)=J'(Y)(\Phi)$, going backward in time, is equivalent to
  \[
  -Z'(t) - \epsilon {\cal F}'\big(Y(t)\big)Z(t) = 0,\quad Z(T)=1
  \quad\Rightarrow\quad
  Z(t) =\exp\Big(\epsilon\int_T^t{\cal
    F}'\big(Y(s)\big)\,\text{d}s\Big),
  \]
  which is bounded for $0\le \epsilon t\le\epsilon T={\cal O}(1)$, since the adjoint transfer operator ${\cal F'}(Y)$ is bounded. The remaining term in~(\ref{dc:1}) measures the discretization error in the adjoint fast scale problem. With arguments similar to those used in the proof to Lemma~\ref{lemma:primal}, second order convergence in $k$ can be shown. Overall, the adjoint consistency error is of higher order
  $
  \Big|[A'_k-A'](Y_{K,k})(Y-Y_{K,k},Z_{K,k})\Big|
  ={\cal O}\Big(\epsilon k^2+\epsilon^2 k^2K^2+k^4\Big).
  $
\end{remark}
%


\subsection{Adjoint problems and evaluation of the error estimator} 
\label{sec:evaluation}

The error estimator~(\ref{errorestimate}) depends on the adjoint solution $Z\in \Y$ and also on the adjoint micro scale solutions $z_Y\in \W$. We shortly sketch the steps required to approximate these adjoint solutions, as the multiscale framework will require a nested approach. For $Y\in \X$ given, $Z\in \Y$ is defined as solution to 
\begin{equation}\label{de:1}
  \int_0^T \big(\Psi'(t)-\epsilon \nabla_Y{\cal F}\big(Y(t)\big)(\Psi(t))\big)\cdot Z(t)\,\text{d}t = J'(Y)(\Psi)\quad\forall \Psi\in \X. 
\end{equation}
The derivative of the transfer operator is given by
\begin{equation}\label{de:2}
  \nabla_Y {\cal F}(Y)(\Psi) = \int_0^1
  \Big(\nabla_Y f\big(Y(t),u_Y(s)\big)+
  \nabla_u f\big(Y(t),u_Y(s)\big)\big(D_Yu_Y(s)\big)\Big)\Psi(t)\,\text{d}s. 
\end{equation}
While the first part involving the derivative in direction of $Y$ is directly accessible, evaluation of the second term requires a further tangent solution $D_y u_Y\in \V^\pi$, the derivative of the time periodic solution $u_Y(s)$ with respect to $Y$. It is given as the solution to
\begin{equation}\label{de:3}
  B'(u_Y)(D_Yu_Y,\phi) = 0\quad\forall\phi\in \W. 
\end{equation}
Finally, to estimate the conformity error introduced by replacing the transfer operator $\cal F$ by its discrete counterpart a further adjoint micro scale solution $z_Y\in \W$, given by the following periodic in time problem must be solved
\begin{equation}\label{de:3.1}
  B'(u_Y)(\psi,z_Y) = J^{\pi'}(u_Y)(\psi)\quad \forall \psi\in \V^\pi,
\end{equation}
where $J^\pi(u_Y) =\int_0^1 f(Y,u_Y(s))\,\text{d}s$.

\begin{figure}[t]
  \begin{center}
    \includegraphics[width=0.49\textwidth]{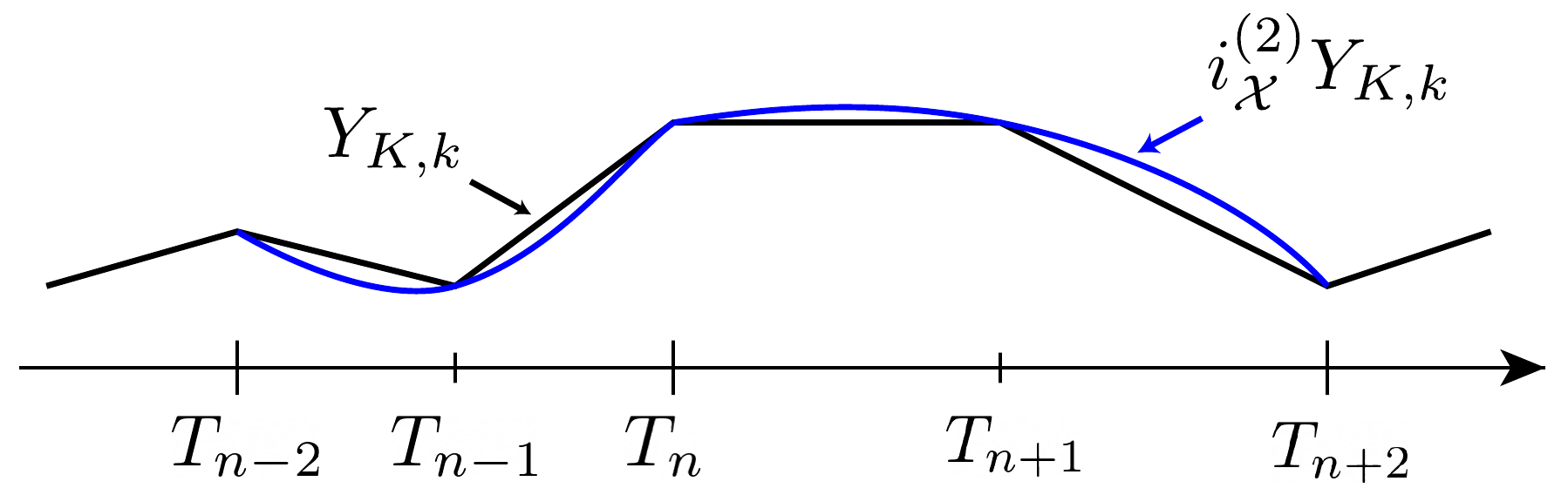}
    \includegraphics[width=0.49\textwidth]{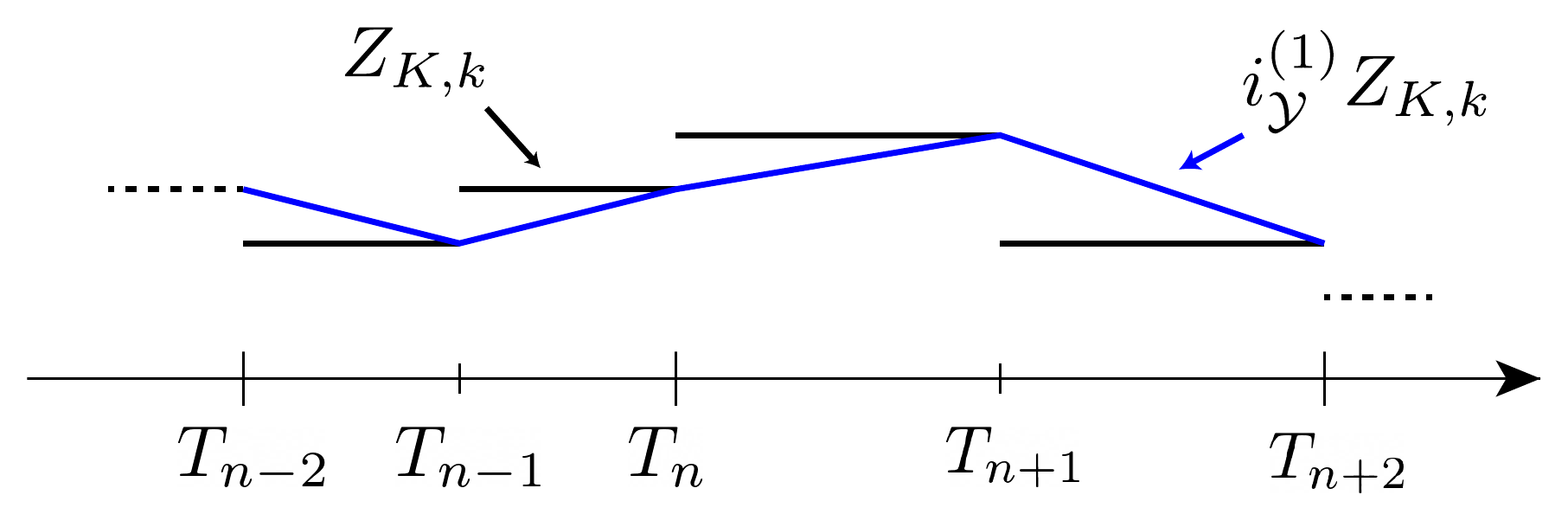}
  \end{center}
  \caption{Reconstructing a higher order approximation from the discrete
    solutions. By $i_\X^{(2)}$ (left) we piecewise quadratic reconstruction on the mesh with twice the mesh spacing and by $i_\Y^{(1)}$ (right) the linear reconstruction on the same mesh.}
  \label{fig:weights}
\end{figure}

The a posteriori error estimator presented in Theorem~\ref{theorem:dwr} cannot be evaluated exactly since it depends on the unknown exact solutions $Y\in \X$ and $Z\in \Y$. Further, several higher order remainders appear, which are simply omitted. To approximate primal and dual residuals weights $Y-i_\X Y$ and $Z-i_\Y Z$ and also to approximate the sum of continuous and discrete adjoint solution $Z+Z_{K,k}$ we use the usual reconstruction mechanism that is based on computing $Y_{K,k}\in \X_K$ and $Z_{K,k}\in \Y_K$ and applying a higher order interpolation by reinterpreting the piecewise linear function $Y_{K,k}$ as piecewise quadratic and the piecewise constant function $Z_{K,k}$ as piecewise linear. Fig.~\ref{fig:weights} illustrates this procedure. We ensure that all macro meshes have a patch structure: two adjacent intervals $I_{2n-1}$ and $I_{2n}$ each have the size $K_{2n-1}=K_{2n}$. The micro meshes are uniform.

This reconstruction of the weights must be considered a computational tool for approximating the functional error. It cannot however give rigorous upper and lower bounds. For general details on this reconstruction we refer to~\cite{BeckerRannacher2001,RichterWick2015_dwr} and in particular to~\cite{MeidnerRichter2014} in the context of temporal Galerkin schemes.

\section{Numerical examples}
\label{sec:num}

\begin{problem}\label{problem:numex1}
  On $I=[0,T]$ with $T=6\cdot 10^5$ find  $y\in C^1(I)$ and $u\in C^2(I)$ such that 
  \begin{equation}\label{testeq:1}
    \begin{aligned}
      &y'(t) = \epsilon f\big(y(t),u(t)\big),& y(0)=0,\\
      &u''(t) + \frac{1}{2} u'(t) + \gamma(y(t))u(t) = \sin(2\pi t), &u(0)=u_0,\, u'(0)=u_0',
    \end{aligned}
  \end{equation}
  with the scale separation parameter $\epsilon=10^{-6}$ and
\begin{equation}\label{reaction}
  f(y,u)\coloneqq \frac{1}{(1+y)(1+64 u^2)},\quad 
  \gamma(y)\coloneqq \big(4\pi^2+32(y-1)\big).
\end{equation}
As functional of interest we consider the slow scale component at final time $T$
\[
J(y) = y(T).
\]
\end{problem}

We produce reference values for the functional output $J(y)=y(T)$ by resolved simulations based on a direct discretization of Problem~\ref{problem:numex1} with the trapezoidal rule using  a small time step size $k$ over the full period of time $I=[0,T]$. Extrapolating $k\to 0$ shows the experimental order of convergence ${\cal O}(k^{2.0015})$ and for all further comparisons we set the reference value to 
\begin{equation}\label{referencevalue}
  J(y_{ref})\coloneqq  1.08704164. 
\end{equation}


We first show that this problem fits into the framework introduced in
Section~\ref{sec:modelproblem}.
\begin{lemma}\label{lemma:problemass}
  Problem~\ref{problem:numex1} satisfies
  Assumptions~\ref{assumption:f} and~\ref{assumption:g}. 
\end{lemma}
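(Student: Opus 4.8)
The plan is to first bring the fast equation in~(\ref{testeq:1}), which is of second order, into the first-order form required by Problem~\ref{problem:model}. Introducing $\mathbf{u}=(u_1,u_2):=(u,u')$ I obtain $c=1$, $d=2$ and
\[
g(t,y,\mathbf{u})=\begin{pmatrix} u_2\\ -\tfrac12 u_2-\gamma(y)u_1+\sin(2\pi t)\end{pmatrix},\qquad f(y,\mathbf{u})=\frac{1}{(1+y)(1+64u_1^2)},
\]
so that $f$ depends on $\mathbf{u}$ only through $u_1$. The remaining work is to check Assumptions~\ref{assumption:f} and~\ref{assumption:g} term by term against this data.

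The one genuinely delicate point is that the hypotheses are stated globally on $\mathds{R}^c\times\mathds{R}^d$, whereas the data here are only well behaved on the region actually visited by the trajectory. Since $f>0$ wherever $1+y>0$ and $y(0)=0$, the slow equation $y'=\epsilon f\ge 0$ forces $y$ to be nondecreasing, hence $y(t)\in[0,\epsilon T]$ with $\epsilon T=0.6$; on this interval $1+y\ge 1$ and $\gamma(y)=4\pi^2+32(y-1)$ is strictly positive (it vanishes only at $y=1-\pi^2/8<0$). Read literally, however, the assumptions fail globally: $f$ blows up as $y\to-1$, the $y$-Lipschitz estimate for $g$ carries a factor $|u_1|$ and so is not uniform over all $\mathbf{u}$, and $\gamma$ turns negative for very negative $y$. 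I would dispose of this in the standard way, by replacing $f$ and $\gamma$ (and with them $g$) by globally bounded, globally Lipschitz extensions that agree with the originals on a fixed neighbourhood of the attracting set $\{0\le y\le 0.6\}\times\{\|\mathbf{u}\|\le R\}$; as the solution never leaves this set the problem is unchanged, while the extended data satisfy the assumptions on all of $\mathds{R}^c\times\mathds{R}^d$. This truncation is the main --- essentially the only --- conceptual obstacle; the rest is differentiation.

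On the localized region Assumption~\ref{assumption:f} is routine: $0<f\le 1$ gives the bound~(\ref{ass:1Bf}) with $\Cas{1Bf}=1$, and since $\partial_y f=-(1+y)^{-2}(1+64u_1^2)^{-1}$ and $\partial_{u_1}f=-128u_1(1+y)^{-1}(1+64u_1^2)^{-2}$ are both bounded (the latter because $u_1/(1+64u_1^2)^2$ is bounded, while $\partial_{u_2}f=0$), the Lipschitz bounds~(\ref{ass:1Lf}) follow. For Assumption~\ref{assumption:g}, continuity, smoothness and the $1$-periodicity~(\ref{ass:1Pg}) (entering only through $\sin(2\pi t)$) are immediate, and~(\ref{ass:1Lg}) holds on the truncated region with constant proportional to $R$. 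The characteristic check is the Jacobian
\[
\nabla_{\mathbf{u}}g=\begin{pmatrix}0 & 1\\ -\gamma(y) & -\tfrac12\end{pmatrix},
\]
whose eigenvalues solve $\lambda^2+\tfrac12\lambda+\gamma(y)=0$; since $\gamma(y)>0$ their product is positive and their sum is $-\tfrac12<0$, so both have negative real part, and in the relevant range ($\gamma(y)>\tfrac1{16}$) they are complex with $\mathrm{Re}(\lambda)=-\tfrac14$. Thus~(\ref{ass:negdef}) holds with $\Las{negdef}=\tfrac14$.

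It remains to verify the periodic micro problem~(\ref{ass:perproblem})--(\ref{ass:uclip}). For fixed $y$ this is the linear constant-coefficient system $\mathbf{u}'=A(y)\mathbf{u}+F(t)$ with $A(y)=\nabla_{\mathbf{u}}g$ as above and $F(t)=(0,\sin 2\pi t)^\top$. The period map $\mathbf{u}(0)\mapsto\mathbf{u}(1)$ is affine, $\mathbf{u}(1)=e^{A(y)}\mathbf{u}(0)+\int_0^1 e^{A(y)(1-s)}F(s)\,\mathrm{d}s$, and because the eigenvalues of $A(y)$ have real part $-\tfrac14$ those of $e^{A(y)}$ have modulus $e^{-1/4}<1$; hence $I-e^{A(y)}$ is invertible and the condition $\mathbf{u}(0)=\mathbf{u}(1)$ determines a unique periodic solution $u_y$, with an explicit bound giving~(\ref{ass:boundu}) and a constant $\Cas{boundu}$. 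Finally, $A(y)$ depends affinely on $y$ through $\gamma$ and $I-e^{A(y)}$ stays boundedly invertible over the localized $y$-range, so $y\mapsto u_y$ is $C^1$, in particular Lipschitz, which yields~(\ref{ass:uclip}) with a finite $\Cas{uclip}$. Assembling these pieces establishes the claim.
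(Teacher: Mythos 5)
Your proof is correct in substance and in places more careful than the paper's own, but it reaches the conclusion by partly different means, so a comparison is in order. The paper verifies Assumption~\ref{assumption:f} by essentially your computation (obtaining $\Cas{1Bf}=1$ and $\Cas{1Lf}=8$ directly from difference quotients rather than from the partial derivatives), rewrites the second-order equation as the same first-order system, and checks the eigenvalue condition~(\ref{ass:negdef}); note that the paper's proof uses the damping coefficient $\tfrac35$ (giving $\Las{negdef}=\tfrac{3}{10}$), which is inconsistent with the statement of Problem~\ref{problem:numex1}, whereas your value $\Las{negdef}=\tfrac14$ is the one consistent with the stated coefficient $\tfrac12$. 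Where you genuinely diverge is in the last two steps. For existence and uniqueness of the periodic micro solution the paper argues dynamically (trajectories contract onto one another because of the damping, so $v(t+1)-v(t)$ decays), while you use the period map and invertibility of $I-e^{A(y)}$ --- a cleaner, fully explicit fixed-point argument that also gives the bound~(\ref{ass:boundu}) directly. For the Lipschitz stability~(\ref{ass:uclip}) the paper writes the equation satisfied by the difference of two periodic solutions and reuses its a priori bound with right-hand side $\big(\gamma(y_1)-\gamma(y_2)\big)v_2$, while you invoke $C^1$ dependence of $(I-e^{A(y)})^{-1}$ on $y$ over the compact parameter range. Both routes are valid; the paper's difference-equation trick would survive beyond linear micro problems, whereas your argument exploits linearity and yields explicit constants.

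One caveat concerns your truncation step. You are right that, read literally, the global hypotheses fail ($f$ blows up at $y=-1$, and~(\ref{ass:1Lg}) cannot hold uniformly in $u\in\mathds{R}^d$ for any genuinely $y$-dependent linear system, since $\|g(t,y_1,u)-g(t,y_2,u)\|=32\,|y_1-y_2|\,|u_1|$); the paper silently ignores this and in fact never checks~(\ref{ass:1Lg}) at all, so identifying the issue is a merit of your write-up. Cutting off the $y$-dependence of $f$ and $\gamma$ outside a neighbourhood of $[0,0.6]$ is unproblematic. But truncating the $u$-dependence of $g$ --- which is what~(\ref{ass:1Lg}) literally demands --- is not: replacing $\gamma(y)u_1$ by $\gamma(y)\sigma(u_1)$ with $\sigma$ bounded forces $\sigma'$ to approach $0$ somewhere, and at such points the Jacobian $\nabla_u g$ acquires an eigenvalue with real part arbitrarily close to $0$, so the strict uniform bound~(\ref{ass:negdef}) is lost. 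The honest resolution is the one implicit in the paper: the assumptions must be read as holding on the bounded sets actually visited by the slow and fast solutions ($y\in[0,0.6]$, $\|u\|\le R$), and on those sets your verifications --- like the paper's --- are complete.
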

\begin{proof}
  For $f(t,y)$ it holds
  $|f(t,y)|\le 1$ and, for $y,Y\in\mathds{R}^c$
  \[
  \big|f(y,u)-f(Y,u)\big| = \frac{|Y-y|}{|1+y|\cdot|1+Y|\cdot |1+64
    u^2|}\le |y-Y|, 
  \]
  as well as
  \[
  \big|f(y,u)-f(y,U)\big| = \frac{64|u+U|\cdot |u-U|}{|1+y|\cdot|1+64
    u^2|\cdot |1+64U^2|}
  \le 8 |u-U|,
  \]
  which shows boundedness and Lipschitz continuity with $\Cas{1Bf}=1$ and $\Cas{1Lf}=8$. 
  
  Next we reformulate the microscale problem as a first order system in
  $v(t):=\big(u_1(t),u_2(t)\big)$, with $u_1(t) = u(t)$ and $u_2(t)=u'(t)$
  \begin{equation}\label{l19:1}
    \begin{pmatrix}u_1\\u_2\end{pmatrix}'(t)=
      \begin{pmatrix}
        0 & 1 \\ - \gamma(y) & -\frac{3}{5}
      \end{pmatrix}\begin{pmatrix}u_1(t)\\u_2(t)
      \end{pmatrix}
      +
      \begin{pmatrix} 0\\ \sin(2\pi t) \end{pmatrix} =: G\big(t,y,v(t)\big)
  \end{equation}
  The Jacobian $\nabla_u G$ has the eigenvalues
  $\lambda_{1/2} = -\frac{3}{10} \pm
  \frac{\sqrt{9-100\gamma(y)}}{10}$, 
  which for the specific choice of $\gamma(y)$ have strictly negative
  real part with $\Las{negdef}=\frac{3}{10}$. This shows that there
  exists a unique solution which is bound by the initial values and
  the right hand side $g(t)=\sin(2\pi t)$
  \begin{equation}\label{l19:1.5}
    \|v(t)\| \le C \left(\|v(0)\| + \frac{1}{\Las{negdef}}\right),
  \end{equation}
  with a constant $C=C(y)$ which depends on the eigenvectors of
  $\nabla_u G$, hence on  $y\in X^c$. 
  Furthermore, for $y\in X^c$
  fixed there exists a unique periodic solution, since, for each
  initial value the difference $v(t+1)-v(t)$ will decay to zero. We
  will denote such a periodic solution by $v_y(t)$. And since this
  solution is also reached for the initial $v(0)=0$,
  estimate~(\ref{l19:1.5}) gives
  \begin{equation}\label{l19:1.6}
    \|v_y(t)\| \le C \frac{1}{\Las{negdef}}. 
  \end{equation}
  Having two such periodic solution $v_1(t)$ and $v_2(t)$ belonging to
  the parameters $y_1,y_2\in X^c$ satisfy
  \[
  v_1'(t)-v_2'(t) =
  \begin{pmatrix}
    0&1\\-\gamma(y_1)&-\frac{3}{5}
  \end{pmatrix}\big(v_1(t)-v_2(t)\big) +
  \begin{pmatrix}0&0\\\gamma(y_1)-\gamma(y_2)&0
  \end{pmatrix}v_2(t).
  \]
  Considered as equations for the difference $v_1(t)-v_2(t)$ this
  corresponds to problem~(\ref{l19:1}) with $g(t)$ replaced  by the
  periodic function $\big(\gamma(y_1)-\gamma(y_2)\big)v_2(t)$ such
  that~(\ref{l19:1.6}) yields
  \[
  \|v_1(t)-v_2(t)\| \le \frac{C}{\Las{negdef}} \sup_{t\in I^P}
  \|v_2(t)\| \cdot \|\gamma(y_1)-\gamma(y_2)\|
  \le \frac{32C^2}{\Las{negdef}^2}  \|y_1-y_2\|. 
  \]
\end{proof}

\subsection{Convergence of the multiscale algorithm}

We start by analyzing the convergence of the multiscale scheme by running simulations with different but uniform time step sizes for $K$ and $k$ specified by
\begin{equation}\label{stepsizes}
  K_i = 100\,000\cdot 2^{-i},\quad
  k_j = 0.1\cdot 2^{-j},\quad i,j\in \{0,1,2,3,4,5\}. 
\end{equation}
In Fig.~\ref{fig:error_convergence} we show convergence with respect to the small time step size $k$ (left) and with respect to the large time step size $K$ (right). In both cases second order convergence is obtained as long as the step size under investigation is dominant. Furthermore, the results show that the range of chosen step sizes~(\ref{stepsizes}) is balanced with a slight dominance of the small scale error depending on $k$. The raw data is also given in Table~\ref{table:DWR1}. 

\begin{figure}[t]
  \centering
  \begin{subfigure}[b]{0.45\textwidth}
    \includegraphics[width=\textwidth]{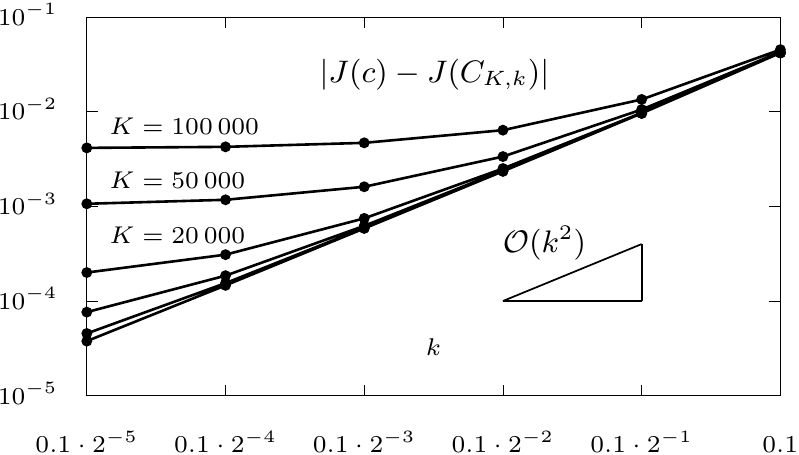}
    \caption{Refinement of the micro scale step $k$.}
    \label{fig:error_k}
  \end{subfigure}
  ~ 
  \begin{subfigure}[b]{0.45\textwidth}
    \includegraphics[width=\textwidth]{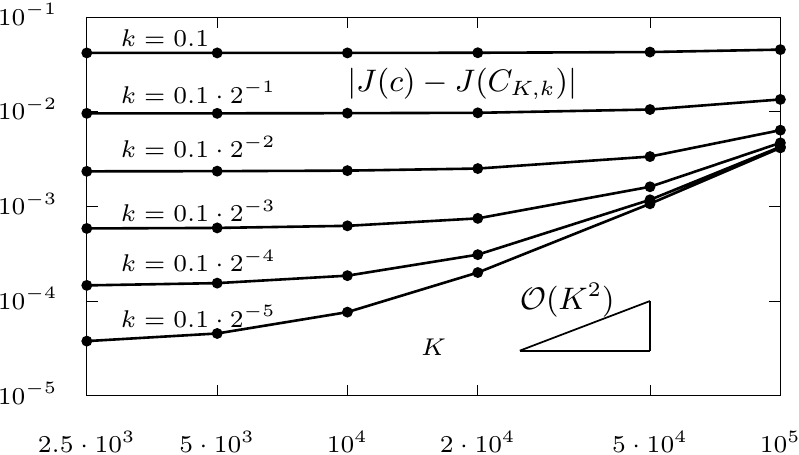}
    \caption{Refinement of the macro scale step $K$.}
    \label{fig:error_K}
  \end{subfigure}
  \caption{Error with respect to the small step size $k$ and the large step size $K$. Left: each line represents a fixed value of $K$. Right: each line takes $k$ fixed.}
  \label{fig:error_convergence}
\end{figure}

\subsection{Evaluation of the error estimator}

\begin{figure}[t]
  \centering
  \includegraphics[width=.45\textwidth]{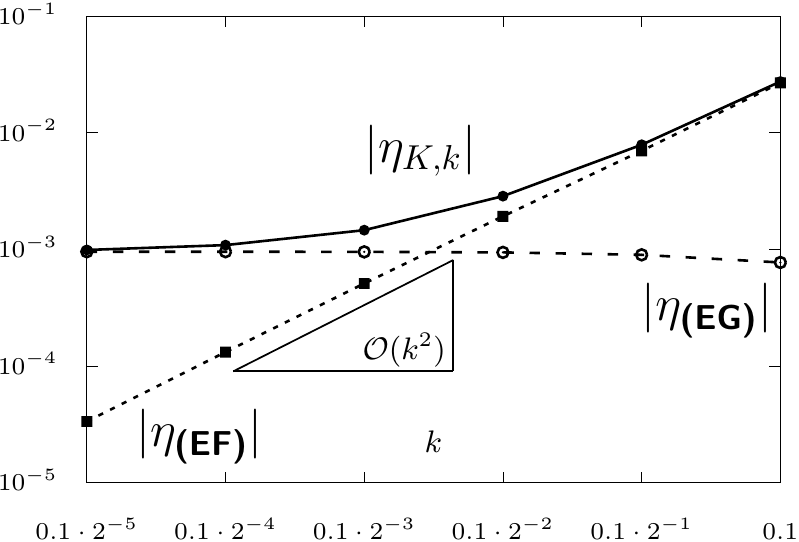}
  \includegraphics[width=.45\textwidth]{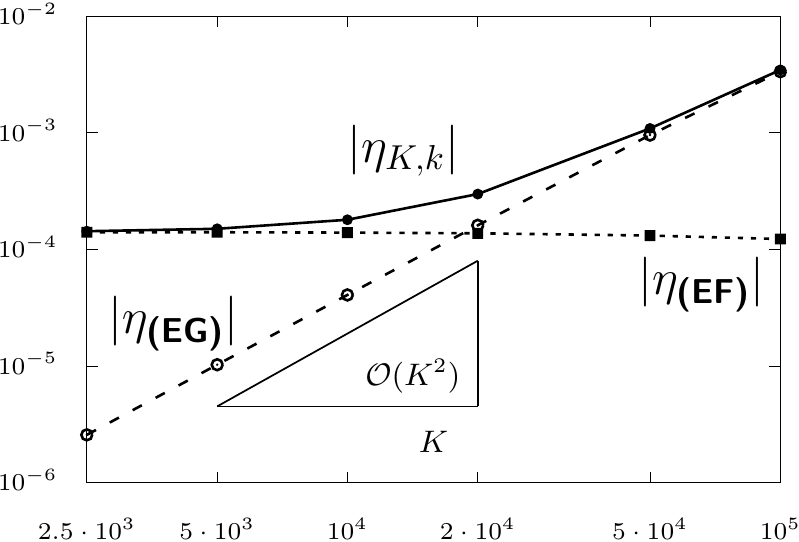}
  \caption{Error estimator $\eta_{K,k}$ and the parts $\eta_\EG$ and
    $\eta_\EF$ that make it up. Left: fixed macro step size $K=50\,000$,
    and right: fixed micro step size $k=0.00625$.}
  \label{fig:dwr_parts}
\end{figure}

Next, we analyze the quality of the a posteriori error estimator derived in the previous section. We will show that this error estimator is accurate in predicting $J(y)-J(Y_{K,k})$ for the complete range of step sizes shown in~(\ref{stepsizes}). The smallest step sizes reach $K_{min}=2\,500$ and $k_{min}=0.003125$ such that the discretization error $O(\epsilon^2K^2+k^2)$ will still dominate the averaging error $O(\epsilon)$. The tolerance for reaching periodicity is set to $tol_P=10^{-9}$. The raw values indicating $J(y_{ref})-J(Y_{K,k})$, overall error estimator $\eta_{K,k}$ and its splitting into discretization error $\eta_\EG$, primal conformity error (fine scale error) $\eta_\EF$ adjoint conformity error (which is of higher order) $\eta_{\EF'}$ are shown in Table~\ref{table:DWR1}. Finally, we also present the efficiency of the error estimator by indicating the effectivity index 
\begin{equation}\label{effectivity}
  \text{eff}_{K,k} = 100\%\cdot\frac{\eta_{K,k}}{J(y_{ref})-J(Y_{K,k})}. 
\end{equation}
Values above $100\%$ show an overestimation of the error, values below $100\%$ an underestimation. The results collected in Table~\ref{table:DWR1} however show  a highly robust estimation for all combinations of small and large step sizes.

\begin{table}[t]
  \resizebox{\textwidth}{!}{\begin{tabular}{c | c | c  c | c  c  c | c }
      \toprule
      k & K &\small $J(y_{ref})-J(Y_{K,k})$ & $\eta_{K,k}$& $\eta_{\EG}$ & $\eta_{\EF}$ & $\eta_{\EF'}$ & $\text{eff}_{K,k}$\\
      \midrule
      \multirow{6}{*}{0.1}
      &100000&$4.52\cdot 10^{-2}$&$2.81\cdot 10^{-2}$&$2.74\cdot 10^{-3}$&$2.55\cdot 10^{-2}$&$-1.77\cdot 10^{-4\phantom{0}}$&62.1\%\\
      &50000&$4.26\cdot 10^{-2}$&$2.75\cdot 10^{-2}$&$7.71\cdot 10^{-4}$&$2.68\cdot 10^{-2}$&$-3.36\cdot 10^{-5\phantom{0}}$&64.6\%\\
      &20000&$4.19\cdot 10^{-2}$&$2.77\cdot 10^{-2}$&$1.30\cdot 10^{-4}$&$2.76\cdot 10^{-2}$&$-5.30\cdot 10^{-6\phantom{0}}$&66.3\%\\
      &10000&$4.18\cdot 10^{-2}$&$2.79\cdot 10^{-2}$&$3.29\cdot 10^{-5}$&$2.79\cdot 10^{-2}$&$-1.23\cdot 10^{-6\phantom{0}}$&66.9\%\\
      &5000&$4.17\cdot 10^{-2}$&$2.81\cdot 10^{-2}$&$8.28\cdot 10^{-6}$&$2.81\cdot 10^{-2}$&$-3.07\cdot 10^{-7\phantom{0}}$&67.2\%\\
      &2500&$4.17\cdot 10^{-2}$&$2.81\cdot 10^{-2}$&$2.08\cdot 10^{-6}$&$2.81\cdot 10^{-2}$&$-7.67\cdot 10^{-8\phantom{0}}$&67.4\%\\
      \midrule
      \multirow{6}{*}{0.05}
      &100000&$1.34\cdot 10^{-2}$&$9.64\cdot 10^{-3}$&$3.12\cdot 10^{-3}$&$6.52\cdot 10^{-3}$&$-9.50\cdot 10^{-6\phantom{0}}$&71.7\%\\
      &50000&$1.06\cdot 10^{-2}$&$7.90\cdot 10^{-3}$&$8.99\cdot 10^{-4}$&$7.00\cdot 10^{-3}$&$-2.56\cdot 10^{-6\phantom{0}}$&74.8\%\\
      &20000&$9.74\cdot 10^{-3}$&$7.45\cdot 10^{-3}$&$1.51\cdot 10^{-4}$&$7.30\cdot 10^{-3}$&$-2.70\cdot 10^{-7\phantom{0}}$&76.5\%\\
      &10000&$9.63\cdot 10^{-3}$&$7.44\cdot 10^{-3}$&$3.83\cdot 10^{-5}$&$7.41\cdot 10^{-3}$&$-6.62\cdot 10^{-8\phantom{0}}$&77.3\%\\
      &5000&$9.60\cdot 10^{-3}$&$7.47\cdot 10^{-3}$&$9.64\cdot 10^{-6}$&$7.46\cdot 10^{-3}$&$-1.63\cdot 10^{-8\phantom{0}}$&77.8\%\\
      &2500&$9.59\cdot 10^{-3}$&$7.48\cdot 10^{-3}$&$2.42\cdot 10^{-6}$&$7.48\cdot 10^{-4}$&$-4.05\cdot 10^{-9\phantom{0}}$&78.0\%\\
      \midrule
      \multirow{6}{*}{0.025}
      &100000&$6.39\cdot 10^{-3}$&$5.07\cdot 10^{-3}$&$3.28\cdot 10^{-3}$&$1.78\cdot 10^{-3}$&$2.80\cdot 10^{-6\phantom{0}}$&79.3\%\\
      &50000&$3.36\cdot 10^{-3}$&$2.86\cdot 10^{-3}$&$9.41\cdot 10^{-4}$&$1.92\cdot 10^{-3}$&$7.57\cdot 10^{-8\phantom{0}}$&85.2\%\\
      &20000&$2.51\cdot 10^{-3}$&$2.16\cdot 10^{-3}$&$1.58\cdot 10^{-4}$&$2.00\cdot 10^{-3}$&$3.70\cdot 10^{-8\phantom{0}}$&86.3\%\\
      &10000&$2.38\cdot 10^{-3}$&$2.07\cdot 10^{-3}$&$4.01\cdot 10^{-5}$&$2.03\cdot 10^{-3}$&$1.04\cdot 10^{-8\phantom{0}}$&87.0\%\\
      &5000&$2.35\cdot 10^{-3}$&$2.06\cdot 10^{-3}$&$1.01\cdot 10^{-5}$&$2.05\cdot 10^{-3}$&$2.67\cdot 10^{-9\phantom{0}}$&87.5\%\\
      &2500&$2.35\cdot 10^{-3}$&$2.06\cdot 10^{-3}$&$2.53\cdot 10^{-6}$&$2.05\cdot 10^{-3}$&$6.71\cdot 10^{-10}$&87.7\%\\
      \midrule
      \multirow{6}{*}{0.0125}
      &100000&$4.68\cdot 10^{-3}$&$3.80\cdot 10^{-3}$&$3.32\cdot 10^{-3}$&$4.74\cdot 10^{-4}$&$1.12\cdot 10^{-6\phantom{0}}$&81.1\%\\
      &50000&$1.61\cdot 10^{-3}$&$1.46\cdot 10^{-3}$&$9.53\cdot 10^{-4}$&$5.09\cdot 10^{-4}$&$9.70\cdot 10^{-8\phantom{0}}$&90.9\%\\
      &20000&$7.46\cdot 10^{-4}$&$6.92\cdot 10^{-4}$&$1.60\cdot 10^{-4}$&$5.32\cdot 10^{-4}$&$2.22\cdot 10^{-8\phantom{0}}$&92.7\%\\
      &10000&$6.23\cdot 10^{-4}$&$5.79\cdot 10^{-4}$&$4.06\cdot 10^{-5}$&$5.39\cdot 10^{-4}$&$5.86\cdot 10^{-9\phantom{0}}$&93.1\%\\
      &5000&$5.92\cdot 10^{-4}$&$5.53\cdot 10^{-4}$&$1.02\cdot 10^{-5}$&$5.43\cdot 10^{-4}$&$1.48\cdot 10^{-9\phantom{0}}$&93.4\%\\
      &2500&$5.84\cdot 10^{-4}$&$5.47\cdot 10^{-4}$&$2.56\cdot 10^{-6}$&$5.44\cdot 10^{-4}$&$3.72\cdot 10^{-10}$&93.7\%\\
      \midrule
      \multirow{6}{*}{0.00625}
      &100000&$4.26\cdot 10^{-3}$&$3.46\cdot 10^{-3}$&$3.33\cdot 10^{-3}$&$1.22\cdot 10^{-4}$&$3.16\cdot 10^{-7\phantom{0}}$&81.2\%\\
      &50000&$1.17\cdot 10^{-3}$&$1.09\cdot 10^{-3}$&$9.55\cdot 10^{-4}$&$1.31\cdot 10^{-4}$&$3.37\cdot 10^{-8\phantom{0}}$&92.5\%\\
      &20000&$3.09\cdot 10^{-4}$&$2.98\cdot 10^{-4}$&$1.60\cdot 10^{-4}$&$1.37\cdot 10^{-4}$&$7.13\cdot 10^{-9\phantom{0}}$&96.2\%\\
      &10000&$1.86\cdot 10^{-4}$&$1.80\cdot 10^{-4}$&$4.07\cdot 10^{-5}$&$1.39\cdot 10^{-4}$&$1.86\cdot 10^{-9\phantom{0}}$&96.9\%\\
      &5000&$1.55\cdot 10^{-4}$&$1.50\cdot 10^{-4}$&$1.02\cdot 10^{-5}$&$1.40\cdot 10^{-4}$&$4.70\cdot 10^{-10}$&97.2\%\\
      &2500&$1.47\cdot 10^{-4}$&$1.43\cdot 10^{-4}$&$2.57\cdot 10^{-6}$&$1.40\cdot 10^{-4}$&$1.18\cdot 10^{-10}$&97.4\%\\
      \midrule
      \multirow{6}{*}{0.003125}
      &100000&$4.15\cdot 10^{-3}$&$3.37\cdot 10^{-3}$&$3.34\cdot 10^{-3}$&$3.11\cdot 10^{-5}$&$8.27\cdot 10^{-8\phantom{0}}$&81.2\%\\
      &50000&$1.07\cdot 10^{-3}$&$9.90\cdot 10^{-4}$&$9.56\cdot 10^{-4}$&$3.34\cdot 10^{-5}$&$9.58\cdot 10^{-9\phantom{0}}$&92.8\%\\
      &20000&$2.00\cdot 10^{-4}$&$1.95\cdot 10^{-4}$&$1.61\cdot 10^{-4}$&$3.48\cdot 10^{-5}$&$1.98\cdot 10^{-9\phantom{1}}$&97.6\%\\
      &10000&$7.64\cdot 10^{-5}$&$7.60\cdot 10^{-5}$&$4.07\cdot 10^{-5}$&$3.53\cdot 10^{-5}$&$5.14\cdot 10^{-10}$&99.5\%\\
      &5000&$4.55\cdot 10^{-5}$&$4.58\cdot 10^{-5}$&$1.03\cdot 10^{-5}$&$3.55\cdot 10^{-5}$&$1.30\cdot 10^{-10}$&100.7\%\\
      &2500&$3.77\cdot 10^{-5}$&$3.82\cdot 10^{-5}$&$2.57\cdot 10^{-6}$&$3.57\cdot 10^{-5}$&$3.25\cdot 10^{-11}$&101.3\%\\
      \bottomrule
  \end{tabular}}
  \caption{Functional error $J(Y_{K,k})-J(y_{ref})$ and error estimator $\eta_{K,k}$ for different step sizes for the large scale and small scale problem. $\eta_\EG$, $\eta_\EF$ and $\eta_{\EF'}$ are the contributions of the estimator $\text{eff}_{K,k}$ the effectivity, compare~(\ref{effectivity}). All values are rounded to the first three relevant digits.}\label{table:DWR1} 
\end{table}

\FloatBarrier

The analysis of the different contribution shows that $\eta_\EG$ indicates the long term error depending mostly on $K$ and $\eta_\EF$ indicates the short term error depending on $k$, both converging with order two. The adjoint consistency error $\eta_{\EF'}$ shows higher order convergence $O(\epsilon k^2 + \epsilon^2k^2K^2+k^4)$ as stated in Remark~\ref{remark:adjointconsistency} and hence, it can be neglected. 

For $K=50\,000$ fixed and varying $k$ and for $k=0.00625$ fixed and varying $K$ respectively, Figure~\ref{fig:dwr_parts} shows the separation of the error estimator into long term and short term influences each. These results motivate to use $\eta_\EF$ and $\eta_\EG$ for controlling an adaptive procedure to find an optimally balanced discretization $K,k$.

\subsection{Adaptive control}

We write the error estimator as a sum over the subdivisions of the long time horizon $I$. By doing this we can quantify the error contribution of each subdivision. For the error contribution of element $I_n=(T_{n-1},T_n]$, the error contribution is comprised of two parts. The error discretization of the averaged long term problem

\begin{multline}\label{ls_subdivision}
  \eta_{\EG}^n:= -\frac{1}{2} A_{k|I_n}(Y_{K,k},Z-i_Y Z) \\
  +\frac{1}{2}\Big(
  J'_{|I_n}(Y_{K,k})(Y-i_X Y) - A'_{k|I_n}(Y_{K,k})(Z_{K,k},Y-i_X Y) \Big)
\end{multline}
and the error discretization of the fast scale problem.
\begin{equation}\label{fs_subdivision}
  \eta_\EF^n := \frac{1}{2}\epsilon\int_{I_n} \eta^\pi\big(Y_{K,k}(t)\big)\cdot
  \big( Z(t)+Z_{K,k}(t)\big)\,\text{d}s.
\end{equation}

\FloatBarrier

\begin{figure}[t]
  \centering
  \includegraphics[width=0.48\textwidth]{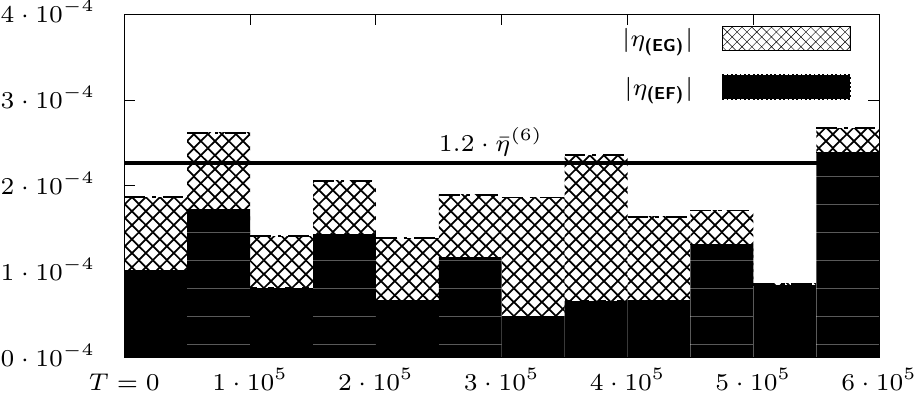}
  \includegraphics[width=0.48\textwidth]{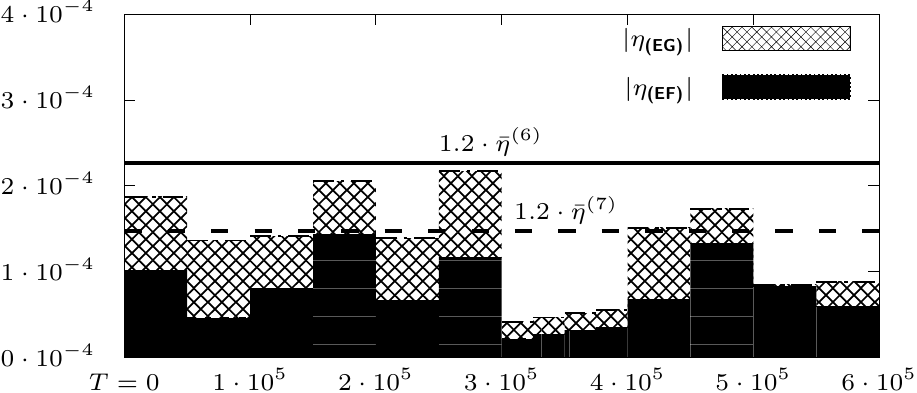}
  \caption{Absolute values of the error estimator $|\eta_\EG^n|+|\eta_\EF^n|$ for all macro steps before (left) and after (right) the sixth step of adaptive refinement. The bold line indicates the refinement threshold for iteration $6$, the dashed line indicates iteration 7.}
  \label{fig:step67}
\end{figure}

We introduce the following method for refining macro scale and micro scale:
\begin{algo}[Adaptive refinement]
  \label{algo:adaptivity}
  Let an initial subdivision $I_K^{(1)}$ into $N^{(1)}$ macro steps be given with uniform but possibly distinct partitions $I^{P,(1)}_n$ for each subdivision $n=1,\dots,N^{(l)}$. Let $\beta\in\mathds{R}$ with $\beta\approx 1$. Iterate for $l=1,2,\dots$
  \begin{enumerate}
  \item \label{algo:i1} Compute $\eta_\EG^n$ and $\eta_\EF^n$ for each $n=1,\dots,N^{(l)}$. 
  \item Calculate the average\vspace{-1em}
    \begin{equation}\label{threshold} \vspace{-1.em}
      \bar\eta^{(l)}\coloneqq  \frac{1}{N^{(l)}} \sum_{n=1}^{N^{(l)}} \Big( |\eta_\EG^n| + |\eta_\EF^n|  \Big)
    \end{equation}
  \item \label{algo:i2} For each $n=1,\dots,N^{(l)}$: if $|\eta_\EG^n| + |\eta_\EF^n| > \beta\cdot\bar\eta^{(l)}$, we refine this cell:
    \begin{enumerate}
    \item If $|\eta_\EG^n| > \beta |\eta_\EF^n|$ we refine $I_n = (T_{n-1}, T_n]$ into two intervals $(T_{n-1},T_n^*]$ and $(T_n^*,T_n]$ where $T_n^*$ is the midpoint of $I_n$. $I_n^{P,(l)}$ is kept for both new steps. 
        \item If $|\eta_\EF^n| > \beta |\eta_\EG^n|$ refine the subdivision $I_n^{P,(l)}$ by cutting the step size in half.
        \item Otherwise refine $I_n$ and $I^P_n$ according to \emph{3.a)} and \emph{3.b)}. 
    \end{enumerate}
  \end{enumerate}
\end{algo}
We illustrate the functionality of Algorithm~\ref{algo:adaptivity} starting with $I_K^{(1)}$ with $K=50\,000$ and $k=0.05$ on each $I_n^{P,(1)}$.

In Figure~\ref{fig:step67} we discuss the sixth refinement step of Algorithm~\ref{algo:adaptivity} in detail. The upper figure shows the error estimator $\eta_{K,k}^{(6)}$ and its partitioning into $\eta_\EG$ and $\eta_\EF$ for each of the 12 macro steps (there has been no refinement of $K$ in the first 5 iterations). The bold line indicates the tolerance for refinement, i.e. $\beta\cdot \bar\eta^{(6)}$ for $\beta=1.2$. Three steps exceed this limit and will be refined. In $I_2^{(6)}$ and $I_{12}^{(6)}$ the micro scale error is dominating and \emph{Step 3.b)} is applied, in $I_{8}^{(6)}$ the dominance of the macro scale error leads to a refinement on the $K$-scale according to \emph{Step 3.a)}. To keep the patch structure of the macro mesh we also refine $I_7^{(6)}$. The resulting discretization and the error estimator in the next step is shown in the lower plot.

The adaptive algorithm roughly balances the error contributions coming from macro error and micro error over the first couple of steps, see Fig.~\ref{fig:ratio_FS_LS} for details. 
In Fig.~\ref{fig:ratio_dwr_err} we further plot the effectivity index~(\ref{effectivity}) on this sequence of adaptively refined meshes and show that the error estimator still gains accuracy for increased resolution in $k$ and $K$.

Refinement in Algorithm~\ref{algo:adaptivity} is based on the absolute values of the local error contributions $|\eta_\EF^n|$ and $|\eta_\EG^n|$ and we introduce the indicator index \vspace{-0.5em}
\begin{equation}\label{indicators}
  \text{ind}_{K,k}\coloneqq
  \sum_{n=1}^N \frac{|\eta_\EG^n| + |\eta_\EF^n|}{|J(y_{ref})-J(Y_{K,k})|}. 
\end{equation}

Figs.~\ref{fig:ratio_thresh_err} and~\ref{fig:acc_thresh_parts} show values close to one and suggest no significant overestimation, neither in the complete error or in the single parts. 

\begin{figure}[t]
  \centering
  \begin{subfigure}[t]{0.45\textwidth}
    \caption{Effectivity of the error estimator.}
    \includegraphics[width=\textwidth]{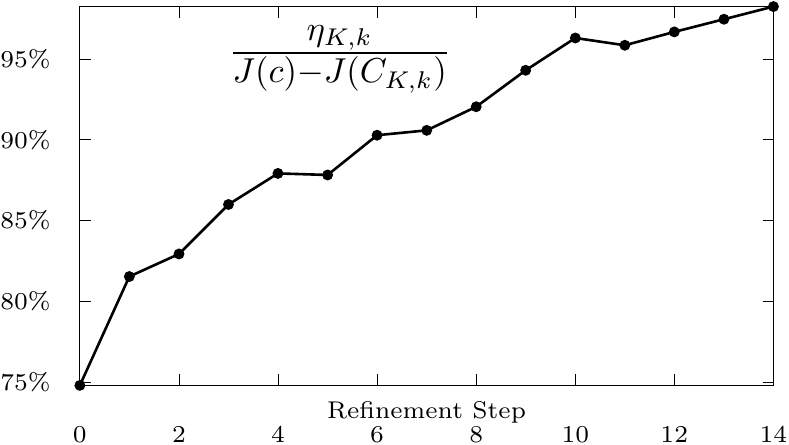}
    \label{fig:ratio_dwr_err}
  \end{subfigure}
  ~ 
  \begin{subfigure}[t]{0.45\textwidth}
    \caption{Effectivity of the error indicators. }
    \includegraphics[width=\textwidth]{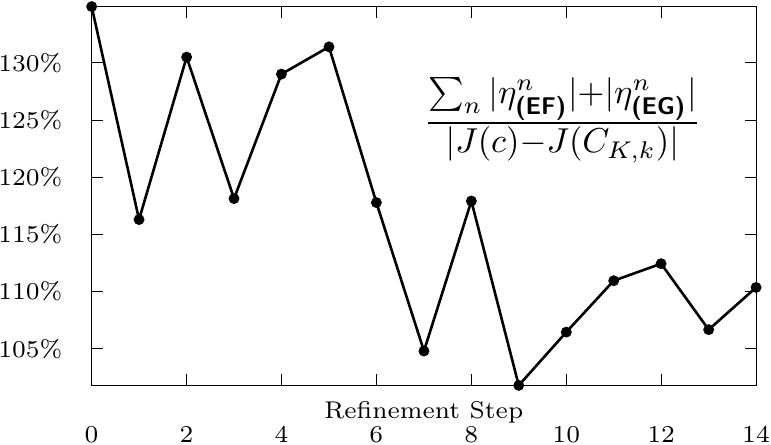}
    \label{fig:ratio_thresh_err}
  \end{subfigure}
  
  \begin{subfigure}[t]{0.45\textwidth}
    \caption{Balancing of micro and macro errors.}
    \includegraphics[width=\textwidth]{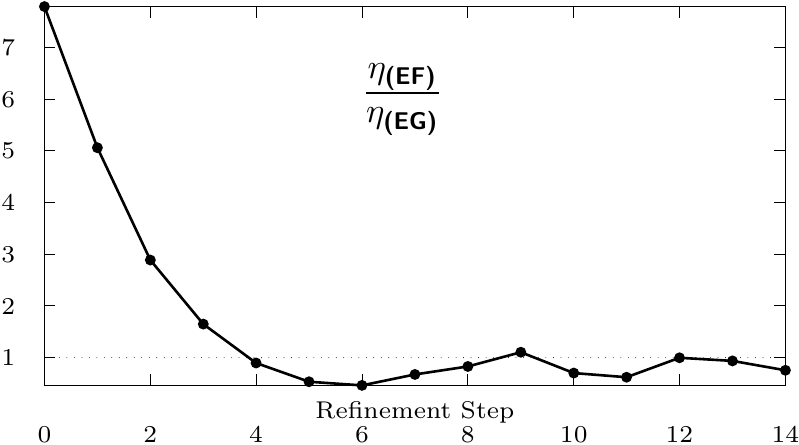}
    \label{fig:ratio_FS_LS}
  \end{subfigure}
  ~ 
  \begin{subfigure}[t]{0.45\textwidth}
    \caption{Effectivity of the partial indicators.}
    \includegraphics[width=\textwidth]{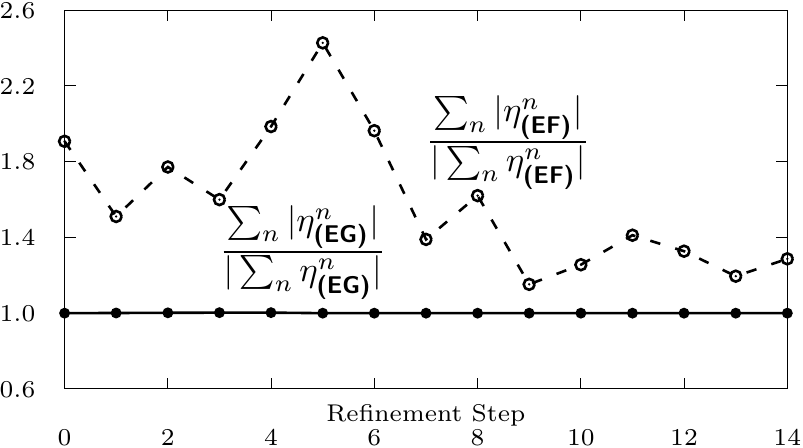}
    \label{fig:acc_thresh_parts}
  \end{subfigure}
  \caption{Performance on adaptive meshes: Effectivity~(\ref{effectivity}) (top/left), indicator-effectivity~(\ref{indicators}) (right) and trend towards balancing error contributions (bottom/right).}
\end{figure}

\begin{figure}[h!]
  \centering
  \begin{subfigure}[b]{0.45\textwidth}
    \caption{Error plotted over the effort~(\ref{effort_meas}).}
    \includegraphics[width=\textwidth]{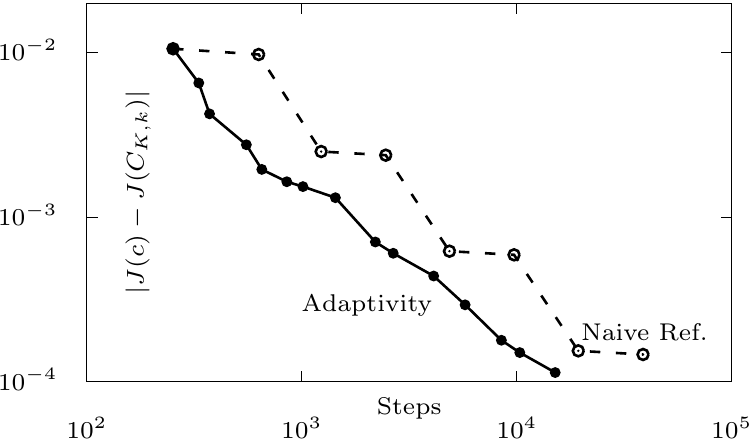}
    \label{fig:error_effort}
  \end{subfigure}
  ~ 
  \begin{subfigure}[b]{0.45\textwidth}
    \caption{Error over the cumulative effort. }
    \includegraphics[width=\textwidth]{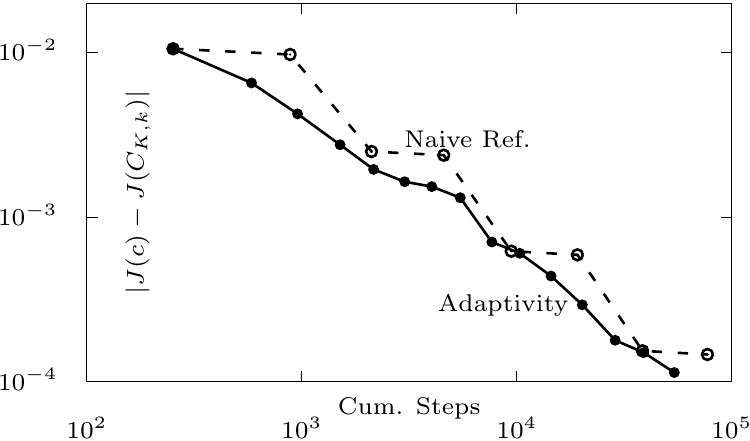}
    \label{fig:error_cumeffort}
  \end{subfigure}
  \caption{Comparison of the accuracy on uniform mesh refinement (alternately in micro and macro problem) and adaptive meshes. }\label{fig:adaptivityvsnaive}
\end{figure}

To measure the computational effort on locally refined discretizations we count the overall number of time steps to be computed in the macro and the micro problem:\vspace{-0.5em}
\begin{equation}\label{effort_meas}
  E_{K,k}^{(l)}\coloneqq \sum_{n=1}^{N^{(l)}} \Big( 1 + \frac{1}{k_n^{(l)}} \Big). 
\end{equation}
We do not take into account that multiple iterations are required within the Newton solver and that multiple cycles must be repeated for finding a periodic solution. The computation of the dual solution requires roughly the same effort, since the scheme runs backwards in time and also calls for the solution of periodic in time micro problem. With these solutions one can compute the error estimator. Figure~\ref{fig:adaptivityvsnaive} shows the error $J(y_{ref})-J(Y_{K,k})$ on sequences of adaptive and uniform meshes plotted over the effort~(\ref{effort_meas}). Adaptivity gives a slight advantage for the adaptive discretization. Since the regularity of the solution is very high, significant local effects cannot be expected.


\subsection{Test case with pronounced local behavior}
\label{sec:num2}


As a second test case we consider the following slightly modified problem that shows a more pronounced dependency of the slow scale on the fast scale solution. Here, we expect a larger benefit of local mesh adaptivity. Fig.~\ref{fig:macro2} shows the averaged slow scale solution  $Y(t)$ for both test cases. 

\begin{problem}\label{problem:numex2}
  On $I=[0,10^6]$ find $y\in C^1(I)$ and $u\in C^2(I)$ such that
  \[
    \begin{aligned}
      &y'(t) = \epsilon f\big(y(t),u(t)\big), &y(0)&=0,\\
      &u''(t) + \frac{1}{2} u'(t) + \gamma(y(t))u(t) = \sin\big(2\pi t\big),& u(0)=u_0,\, u'(0)&=u_0',
    \end{aligned}
  \]
  with the scale separation parameter $\epsilon=10^{-6}$ and
  \[
    f\big(y(t),u(t)\big) \coloneqq \frac{\tanh\big( 500u(t)^2 - 5 \big) +1.01}{1+y(t)},\quad 
      \gamma(y) \coloneqq 20 \tanh\big( -10y(t) + 6\big) +21.
  \]
\end{problem}
Similar to Lemma~\ref{lemma:problemass} we can show that this
problem also falls into the general framework discussed in this
paper. The proof follows that of Lemma~\ref{lemma:problemass} line by line.
\begin{lemma}
  Problem~\ref{problem:numex2} satisfies
  Assumptions~\ref{assumption:f} and~\ref{assumption:g}.
\end{lemma}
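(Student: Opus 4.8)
The plan is to follow the proof of Lemma~\ref{lemma:problemass} essentially verbatim, since the two problems share the same architecture: a slow equation with a rational dependence on $y$ and a bounded nonlinearity in $u$, coupled to a damped harmonic oscillator on the fast scale. I would first verify Assumption~\ref{assumption:f} (boundedness and the two Lipschitz bounds of $f$) and then Assumption~\ref{assumption:g} (reformulation as a first order system, the eigenvalue/dissipativity condition, existence, boundedness and Lipschitz dependence of the periodic micro solution).

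For the slow scale I would begin by observing that $y(0)=0$ and that the numerator $\tanh(500u^2-5)+1.01\ge 0.01>0$, so $y'=\epsilon f\ge 0$ and hence $y(t)\ge 0$ along the trajectory; consequently $1+y\ge 1$ and the denominator is harmless. Boundedness then follows from $\tanh\in(-1,1)$, giving $|f(y,u)|\le 2.01=:\Cas{1Bf}$. Lipschitz continuity in $y$ is immediate from $\tfrac{1}{1+y}-\tfrac{1}{1+Y}=\tfrac{Y-y}{(1+y)(1+Y)}$ together with the bounded numerator. The Lipschitz estimate in $u$ is the one genuinely new computation: writing $f(y,u)-f(y,U)=\tfrac{\tanh(500u^2-5)-\tanh(500U^2-5)}{1+y}$, I would control it through the derivative $\frac{d}{du}\tanh(500u^2-5)=1000\,u\,\operatorname{sech}^2(500u^2-5)$ and argue that, although the prefactor $1000u$ grows linearly, the factor $\operatorname{sech}^2(500u^2-5)$ decays faster than any polynomial, so the product is bounded by a finite constant; this yields a global constant $\Cas{1Lf}$.

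For the fast scale I would reformulate the oscillator as the first order system $v=(u,u')^\top$ with
\[
  v'(t)=\begin{pmatrix}0&1\\-\gamma(y)&-\tfrac12\end{pmatrix}v(t)+\begin{pmatrix}0\\\sin(2\pi t)\end{pmatrix}=:G\big(t,y,v(t)\big),
\]
exactly as in~(\ref{l19:1}); here the time dependence enters only through the $1$-periodic forcing $\sin(2\pi t)$, so~(\ref{ass:1Pg}) holds. The eigenvalues of $\nabla_u G$ are $-\tfrac14\pm\tfrac12\sqrt{\tfrac14-4\gamma(y)}$, and since $\gamma(y)=20\tanh(-10y+6)+21\in(1,41)$ the discriminant is strictly negative, so both eigenvalues are complex with real part exactly $-\tfrac14$; thus~(\ref{ass:negdef}) holds with $\Las{negdef}=\tfrac14$, uniformly in $y$. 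From the strict dissipativity I obtain $\|v(t)\|\le C(\|v(0)\|+\Las{negdef}^{-1})$ as in~(\ref{l19:1.5}), the contractivity of the time-$1$ map and hence a unique periodic solution $v_y$ with bound~(\ref{ass:boundu}). Finally $\gamma$ is globally Lipschitz, $|\gamma(y_1)-\gamma(y_2)|\le 200|y_1-y_2|$ because $\tanh$ is $1$-Lipschitz; subtracting the two periodic systems reduces $v_1-v_2$ to a problem of the form~(\ref{l19:1}) with periodic forcing proportional to $\big(\gamma(y_1)-\gamma(y_2)\big)$ and to the bounded solution $v_2$, so~(\ref{l19:1.6}) gives $\sup_t\|v_1(t)-v_2(t)\|\le \Cas{uclip}\|y_1-y_2\|$, which is~(\ref{ass:uclip}).

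The main obstacle, and the only place where more than copying Lemma~\ref{lemma:problemass} is required, is the $u$-Lipschitz bound for $f$: one must confirm that the sharply peaked factor $\operatorname{sech}^2(500u^2-5)$ truly dominates the linear growth of its prefactor, so that $f(y,\cdot)$ is \emph{globally} and not merely locally Lipschitz. A secondary subtlety, inherited from the first proof, is that the $y$-Lipschitz constant of $G$ formally carries the displacement component of the periodic solution as a factor; this is controlled by restricting to the bounded range of the periodic solutions (bounded by $\Cas{boundu}$), exactly as the forcing term is handled in the $\Cas{uclip}$ estimate.
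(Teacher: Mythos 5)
Your proof is correct and takes the same route as the paper, which simply states that the argument follows Lemma~\ref{lemma:problemass} line by line; you have carried out exactly that adaptation (first-order reformulation, eigenvalue computation giving $\Las{negdef}=\tfrac14$, contraction of the period map, and the $\gamma$-Lipschitz perturbation argument for~(\ref{ass:uclip})). Your treatment of the one genuinely new point, the global $u$-Lipschitz bound for $\tanh(500u^2-5)$ via the boundedness of $1000\,u\,\operatorname{sech}^2(500u^2-5)$, is sound and is in fact more explicit than anything the paper provides.
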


Again we produce reference values for the functional output
$J(y)=y(10^6)$ by resolved simulations based on a direct
discretization of Problem~\ref{problem:numex2} with the trapezoidal rule using  a small time step size $k$ over the full period of time
$I=[0,10^6]$. Extrapolating $k\to 0$ shows the experimental order of
convergence ${\cal O}(k^{2.0014})$ as expected and for further
comparisons we set the reference value to
\begin{equation}\label{referencevalue2}
  J(y_{ref})\coloneqq  0.59223654. 
\end{equation}

\begin{figure}[t]
  \centering
  \includegraphics[width=0.48\textwidth]{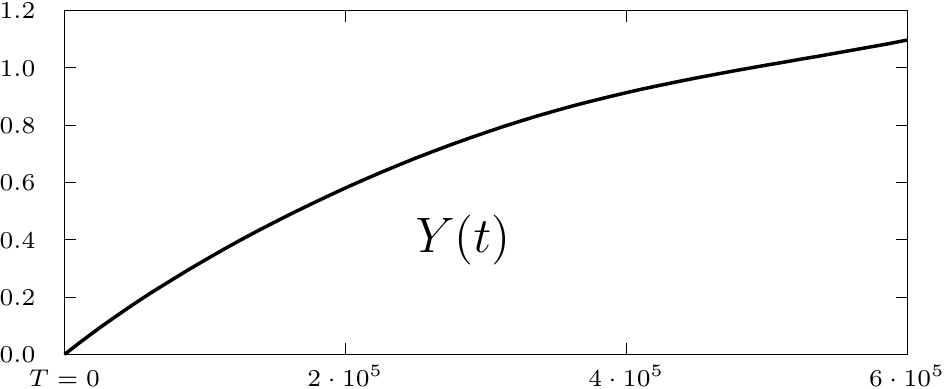}
  \includegraphics[width=0.48\textwidth]{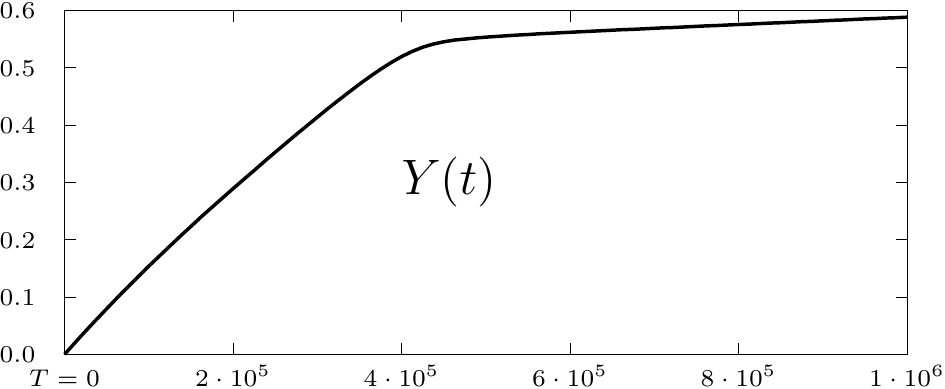}
  %
  %
  \caption{Solution $Y(t)$ of Problem~\ref{problem:numex1} (left) and Problem~\ref{problem:numex2} (right). }
  \label{fig:macro2}
\end{figure}

The solution to this problem alters its character at $t\approx 4.5\cdot 10^5$, see Fig.~\ref{fig:macro2}. Due to the sigmoid nature of $f(y,u)$ in the second argument we have a sudden decay in the rate of change of $Y$. The contributions to the error are also concentrated around this point of interest. For this case we illustrate the functionality of Algorithm~\ref{algo:adaptivity} starting with $I_K^{(1)}$ with $K=50\,000$ and $k=0.05$ on each $I_n^{P,(1)}$. 

\begin{figure}[t]
  \centering
  \includegraphics[width=0.48\textwidth]{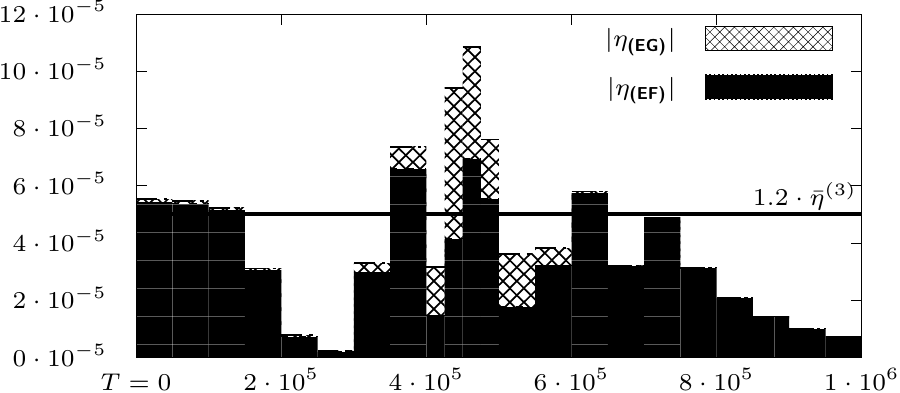}
  \includegraphics[width=0.48\textwidth]{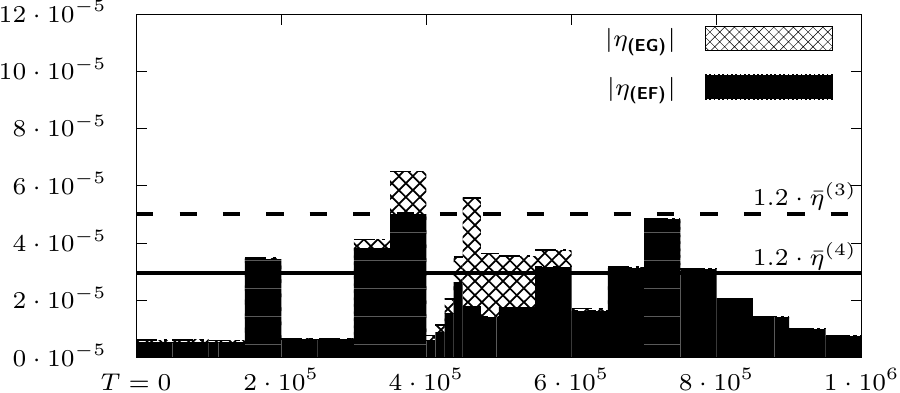}
  \caption{Absolute values of the error estimator $|\eta_\EG^n|+|\eta_\EF^n|$ for all macro steps before (left) and after (right) the third step of adaptive refinement. The bold line indicates the refinement threshold for iteration $3$, the dashed line indicates iteration $4$.}
  \label{fig:step34}
\end{figure}

In Fig.~\ref{fig:step34} the third refinement step is shown. In the upper graph we see the contributions to the error estimator from the subdivisions of $I$ after two refinement steps. The error still concentrates around the point of interest but due to the refinement of the adjacent patches, the estimator values are already better balanced. 
As in Fig.~\ref{fig:step67}, the bold line indicates the tolerance for refinement and for all subdivisions where the total contribution to the error estimator is above this threshold, either $k$ or $K$ is refined. The results of refinement are presented in the right graph. Comparing the bold line to the dotted line shows that there is an improvement in the value for the error.

\begin{figure}[t]
  \centering
  \begin{subfigure}[t]{0.45\textwidth}
    \caption{Balancing of micro and macro errors.}
    \includegraphics[width=\textwidth]{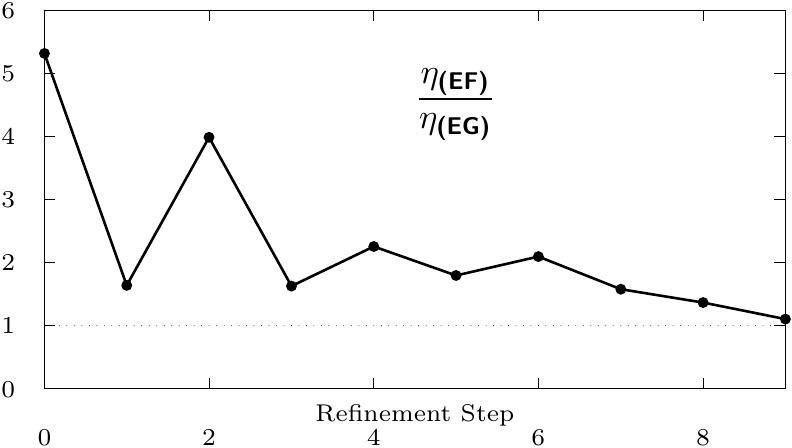}
    \label{fig:ratio_FS_LS2}
  \end{subfigure}
  ~ 
  \begin{subfigure}[t]{0.45\textwidth}
    \caption{Effectivity of the partial indicators.}
    \includegraphics[width=\textwidth]{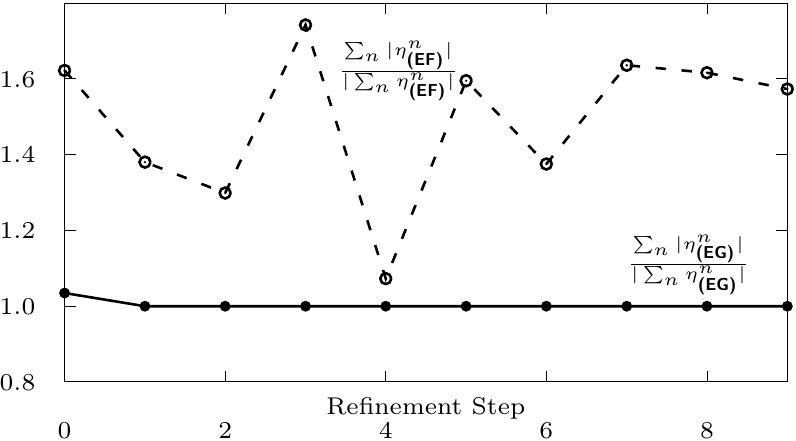}
    \label{fig:acc_thresh_parts2}
  \end{subfigure}
  \caption{Performance on adaptive meshes: Indicator-effectivity~(\ref{indicators}) (right) and trend towards balancing contributions (left).}
\end{figure}

We can see that the error parts $\eta_{\EG}$ and $\eta_{\EF}$ tend to balance, see Fig.~\ref{fig:ratio_FS_LS2}. This is intuitive since refinement is done on the dominant error term of a subdivision according to Algorithm~\ref{algo:adaptivity}. Figure~\ref{fig:acc_thresh_parts2} shows that the partial indicators reasonably approximate the error estimator in this case even though the sign of the estimator can and does change on different subdivisions and during refinement. This shows that contributions of one sign dominate in this case but this is not always guaranteed. In the rare case of the positive and negative contributions to the error estimator cancelling each other out, the error estimator can fail to give an accurate representation of the error.

\begin{figure}[h!]
  \centering
  \begin{subfigure}[b]{0.45\textwidth}
    \includegraphics[width=\textwidth]{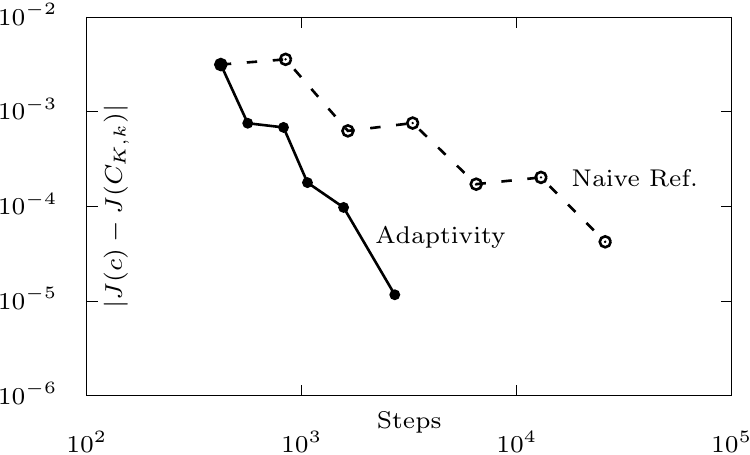}
    \caption{Error plotted over the effort~(\ref{effort_meas}).}
    \label{fig:error_effort2}
  \end{subfigure}
  ~ 
  \begin{subfigure}[b]{0.45\textwidth}
    \includegraphics[width=\textwidth]{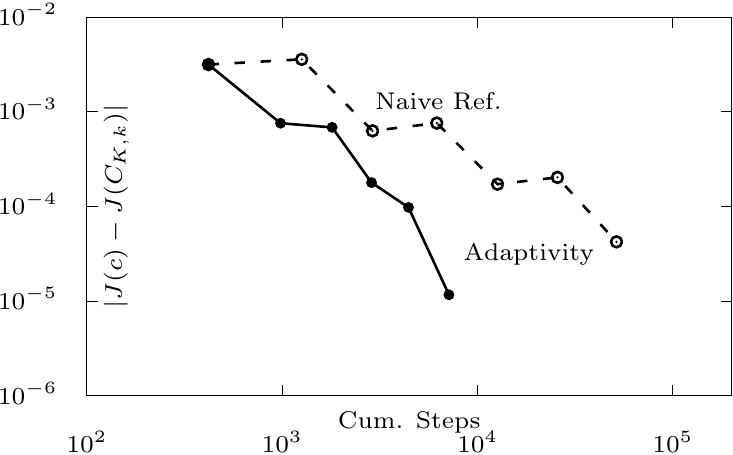}
    \caption{Error over the cumulative effort. }
    \label{fig:error_cumeffort2}
  \end{subfigure}
  \caption{Comparison of the accuracy on uniform and adaptive meshes.}
  \label{fig:adaptivityvsnaive2}
\end{figure}

For this problem our algorithm speeds up performance significantly. According to Fig.~\ref{fig:error_effort2}, to achieve a similar error of around $5\cdot10^{-5}$, we need only around one tenth of the effort. The cumulative effort, see Fig.~\ref{fig:error_cumeffort2}, is also of similar magnitude. This can be explained by the unequal distribution of $\eta_{\EG}$ and especially $\eta_{\EF}$, see Fig.~\ref{fig:step34}. By uniform refinement of $I$ and $I^P$ we cannot alleviate this phenomenon. However our error estimator is able to identify sections of $I$ that need to be treated on a finer scale and also sections, where $I^P$ needs to be subdivided with a smaller $k$ compared to other sections. Local refinement then gives a better distribution of the computing resources to calculate the solution $Y(t)$. Since the dependency on the slow scale is not uniform in this problem, we are able to gain efficiency with adaptive refinement unlike in the first numerical example. 

To conclude we compare in Table~\ref{tab:alleffort} the effort,
measured in time-steps of the micro-problem to be solved, for the
fully resolved simulation of the original problem with the multiscale
approach based on uniform meshes and on adaptive meshes. To measure
the effort we must multiply the cumulative effort listed above by 2,
to account for the additional effort for solving the adjoint problem
and also by 5, which accounts for the number of cycles required to
find the periodic state. In each case we show the results for the
choice of discretization parameters where the error is below
$5\cdot 10^{-5}$. By using the multiscale scheme the effort is reduced by
about $1:400$, although we must solve adjoint solutions and although 
the approximation of the local periodic solutions requires multiple
cycles in each macro step. If we further employ adaptive mesh
refinement the effort is reduced by $1:2\,800$. The adaptive
multiscale scheme has the further advantage of giving an estimate on
the discretization error. 

\begin{table}[t]
  \begin{center}
    \begin{tabular}{lcccr}
      \toprule
      Approach & Error & $k$ & $K$ & Micro-steps \\
      \midrule 
      Resolved simulation  &$3.43\cdot 10^{-5}$&$\frac{1}{200}$&--&200\,000\,000\\
      Multiscale (uniform) &$4.24\cdot 10^{-5}$&$\frac{1}{160}$&6\,250&513\,800\\
      Multiscale (adaptive)&$1.16\cdot 10^{-5}$&$\frac{1}{320}$&12\,500&71\,360 \\
      \bottomrule
    \end{tabular}
  \end{center}
  \caption{Comparison of the effort for the resolved simulation, the
    multiscale scheme based on uniform and adaptive discretizations measured in the number of overall steps of the micro problem to be  solved.}
  \label{tab:alleffort}
\end{table}


\section{Conclusion}
  We have presented an a posteriori error estimator for a temporal multiscale scheme of HMM type that has recently been introduced~\cite{FreiRichter2020}. This multiscale scheme is based on separating micro and macro scale by replacing the micro scale influences by localized periodic in time solutions. The resulting scheme calls for the solution of one such periodic micro problem in each macro step.

The error estimator is based on the dual weighted residual method for estimating errors in goal functionals. The adjoint problem entering the error estimator has a structure similar to the primal one: each adjoint macro time step requires the solution of a periodic micro problem. In addition, to incorporate the error of the periodic in time micro scale problems, a further adjoint micro problem must be solved in each macro step. The resulting error estimator allows for a splitting of the local error contributions into micro scale and macro scale influences. We have shown very good efficiency of the estimator for a wide range of discretization parameters.

Based on the splitting into micro scale errors and macro scale errors an adaptive refinement loop is presented that allows to optimally balance all discretization parameters.

For the future it remains to extend this setting to temporal multiscale problems involving partial differential equations as discussed in~\cite{FreiRichter2020,MizerskiRichter2020} that will add the further complexity of finding optimal spatial discretization parameters for macro and micro problems.

\paragraph{Acknowledgements}
The work of both authors has been supported by the Deutsche Forschungsgemeinschaft (DFG, German Research Foundation) as part of the GRK 2297 MathCore - 314838170, as well within the project 411046898. Further, the work of TR has been supported by the Federal Ministry of Education and Research of Germany, grant number 05M16NMA.

    \bibliographystyle{plain}
    \bibliography{lit}

\begin{thebibliography}{10}

\bibitem{Abdulleetal2012}
A.~Abdulle, W.~E, B.~Engquist, and E.~Vanden-Eijnden.
\newblock The heterogeneous multiscale method.
\newblock {\em Acta Numerica}, pages 1--87, 2012.

\bibitem{BeckerRannacher1995}
R.~Becker and R.~Rannacher.
\newblock Weighted a posteriori error control in {FE} methods.
\newblock In et~al. H.~G.~Bock, editor, {\em ENUMATH'97}. World Sci. Publ.,
  Singapore, 1995.

\bibitem{BeckerRannacher2001}
R.~Becker and R.~Rannacher.
\newblock An optimal control approach to a posteriori error estimation in
  finite element methods.
\newblock In A.~Iserles, editor, {\em Acta Numerica 2001}, volume~37, pages
  1--202. Cambridge University Press, 2001.

\bibitem{BraackErn2003}
M.~Braack and A.~Ern.
\newblock A posteriori control of modeling errors and discretization errors.
\newblock {\em Multiscale Model. Simul.}, 1(2):221--238, 2003.

\bibitem{BehounkovaTobieChobletCadek2010}
M.~B\v{e}hounkov\'a, G.~Tobie, G.~Choblet, and O.~\v{C}adek.
\newblock Coupling mantle convection and tidal dissipation: Applications to
  enceladus and earth‐like planets.
\newblock {\em J. Geophys. Research}, 115(E9), 2010.

\bibitem{Cioranescu}
Doina Cioranescu and Patrizia Donato.
\newblock {\em An Introduction to Homogenization}.
\newblock Oxford Lecture Series in Mathematics and its applications. Oxford
  University Press, 1999.

\bibitem{Conti1956}
R.~Conti.
\newblock Sulla prolungabilit\'a delle soluzioni di un sistema di equazioni
  differenziali ordinarie.
\newblock {\em Bollettino dell'Unione Matematica Italiana, Serie 3},
  11(4):510--514, 1956.

\bibitem{E2011}
W.~E.
\newblock {\em Principles of Multiscale Modeling}.
\newblock Cambridge University Press, 2011.

\bibitem{EEngquist2003}
W.~E and B.~Engquist.
\newblock The heterogenous multiscale method.
\newblock {\em Comm. Math. Sci.}, 1(1):87--132, 2003.

\bibitem{EngquistTsai2005}
B.~Engquist and Y.-H. Tsai.
\newblock Heterogeneous multiscale methods for stiff ordinary differential
  equations.
\newblock {\em Math. Comp.}, 74(252):1707--1742, 2005.

\bibitem{ErikssonEstepHansboJohnson1995}
K.~Eriksson, D.~Estep, P.~Hansbo, and C.~Johnson.
\newblock Introduction to adaptive methods for differential equations.
\newblock In A.~Iserles, editor, {\em Acta Numerica 1995}, pages 105--158.
  Cambridge University Press., 1995.

\bibitem{FreiRichter2020}
S.~Frei and T.~Richter.
\newblock Efficient approximation of flow problems with multiple scales in
  time.
\newblock {\em SIAM Multiscale Modeling and Simulation}, 2020.

\bibitem{FreiRichterWick2016}
S.~Frei, T.~Richter, and T.~Wick.
\newblock Long-term simulation of large deformation, mechano-chemical
  fluid-structure interactions in {ALE} and fully {E}ulerian coordinates.
\newblock {\em J. Comp. Phys.}, 321:874 -- 891, 2016.

\bibitem{GaldiKyed2016a}
G.~P. Galdi and M.~Kyed.
\newblock Time-periodic solutions to the {N}avier-{S}tokes equations in
  three-dimensional whole-space with a non-zero drift term: Asymptotic profile
  at spatial infinity.
\newblock arXiv:1610.00677v1, 2016.

\bibitem{MeidnerRannacherVihharev2009}
D.~Meidner, R.~Rannacher, and J.~Vihharev.
\newblock Goal-oriented error control of the iterative solution of finite
  element equations.
\newblock {\em J. Num. Anal.}, 17(2), 2009.

\bibitem{MeidnerRichter2014}
D.~Meidner and T.~Richter.
\newblock Goal-oriented error estimation for the fractional step theta scheme.
\newblock {\em Comp. Meth. Appl. Math.}, 14:203--230, 2014.

\bibitem{MeidnerRichter2015}
D.~Meidner and T.~Richter.
\newblock A posteriori error estimation for the fractional step theta
  discretization of the incompressible {N}avier-{S}tokes equations.
\newblock {\em Comp. Meth. Appl. Mech. Engrg.}, 288:45--59, 2015.

\bibitem{MizerskiRichter2020}
J.~Mizerski and T.~Richter.
\newblock The candy wrapper problem - a temporal multiscale approach for
  pde/pde systems.
\newblock In {\em Numerical Mathematics and Advanced Applications - Enumath
  2019}, Lecture Notes in Computational Science and Engineering. Springer,
  2020.

\bibitem{Murakami2012}
S.~Murakami.
\newblock {\em Continuum Damage Mechanics}.
\newblock Springer, 2012.

\bibitem{Oleinik}
O.A. Oleinik, A.S. Shamaev, and G.A. Yosifian.
\newblock {\em Mathematical Problems in Elasticity and Homogenization},
  volume~26 of {\em Studies in Mathematics and its Applications}.
\newblock North-Holland, 1992.

\bibitem{Richter2017}
T.~Richter.
\newblock {\em Fluid-structure Interactions. Models, Analysis and Finite
  Elements}, volume 118 of {\em Lecture notes in computational science and
  engineering}.
\newblock Springer, 2017.

\bibitem{Richter2020}
T.~Richter.
\newblock An averaging scheme for the efficient approximation of time-periodic
  flow problems.
\newblock {\em Computers and Fluids}, 214, 2021.
\newblock https://arxiv.org/abs/1806.00906.

\bibitem{RichterWick2015_dwr}
T.~Richter and T.~Wick.
\newblock Variational localizations of the dual weighted residual method.
\newblock {\em J. Comput. Appl. Math.}, 279:192--208, 2015.

\bibitem{RichterWollner2018}
T.~Richter and W.~Wollner.
\newblock Efficient computation of time-periodic solutions of partial
  differential equations.
\newblock {\em Viet. J. Math.}, 46(4), 2018.

\bibitem{SchmichVexler2008}
M.~Schmich and B.~Vexler.
\newblock Adaptivity with dynamic meshes for space-time finite element
  discretizations of parabolic equations.
\newblock {\em SIAM J. Sci. Comp.}, 30(1):369--393, 2008.

\bibitem{StoerBulirsch2002}
J.~Stoer and R.~Bulirsch.
\newblock {\em Introduction to Numerical Analysis}.
\newblock Springer, 2002.

\bibitem{Thomee1997}
V.~Thom\a'ee.
\newblock {\em {G}alerkin Finite Element Methods for Parabolic Problems}.
\newblock Number~25 in Springer Series in Computational Mathematics. Springer,
  1997.

\end{thebibliography}

\end{document}